\documentclass[11pt,letterpaper]{article}
\newif\ifreviewnotes
\reviewnotesfalse
\usepackage[T1]{fontenc}
\usepackage[utf8]{inputenc}
\usepackage{lmodern,microtype}
\usepackage[margin=1in]{geometry}
\usepackage{amsmath,amssymb,amsthm,mathtools}
\usepackage{graphicx,booktabs,tabularx,array,enumitem,xcolor}
\usepackage[font=small,labelfont=bf]{caption}
\usepackage[authoryear,round]{natbib}
\usepackage[colorlinks=true,linkcolor=blue!45!black,citecolor=blue!45!black,urlcolor=blue!45!black]{hyperref}
\usepackage{fancyhdr,needspace,flafter,float,placeins}

\setlist{topsep=4pt,itemsep=3pt,parsep=0pt}
\newtheorem{theorem}{Theorem}[section]
\newtheorem{proposition}[theorem]{Proposition}
\newtheorem{lemma}[theorem]{Lemma}
\newtheorem{corollary}[theorem]{Corollary}
\theoremstyle{remark}
\newtheorem{remark}[theorem]{Remark}
\newcommand{\R}{\mathbb R}
\newcommand{\one}{\mathbf 1}
\newcommand{\norm}[1]{\left\lVert#1\right\rVert}
\newcommand{\diag}{\operatorname{diag}}
\newcommand{\Lip}{\operatorname{Lip}}
\newcommand{\eps}{\varepsilon}

\newcommand{\Real}{\mathbb R}

\DeclareMathOperator{\rank}{rank}
\DeclareMathOperator{\cof}{cof}
\title{\bfseries Convex Equilibrium Potentials for Monge--Amp\`ere:\\
Curvature Bounds and A Posteriori Evaluation}
\author{%
  \begin{minipage}[t]{0.30\textwidth}
    \centering
    \normalsize Edison V\'azquez\textsuperscript{\(\dagger\)} \\[0.4em]
    \footnotesize Department of Mathematics\\
    CINVESTAV\\[0.3em]
    \href{mailto:evazquez@math.cinvestav.mx}{\texttt{evazquez@math.cinvestav.mx}}
  \end{minipage}%
  \hspace{0.015\textwidth}%
  \begin{minipage}[t]{0.30\textwidth}
    \centering
    \normalsize Yingdong Lu\\[0.4em]
    \footnotesize IBM Research\\[0.3em]
    \href{mailto:yingdong@us.ibm.com}{\texttt{yingdong@us.ibm.com}}
  \end{minipage}%
  \hspace{0.015\textwidth}%
  \begin{minipage}[t]{0.30\textwidth}
    \centering
    \normalsize Tomasz Nowicki\\[0.4em]
    \footnotesize IBM Research\\[0.3em]
    \href{mailto:tnowicki@us.ibm.com}{\texttt{tnowicki@us.ibm.com}}
  \end{minipage}%
}
\date{}
\hypersetup{pdftitle={Convex Equilibrium Potentials for Monge-Ampere: Curvature Bounds and A Posteriori Evaluation},pdfauthor={Edison Vazquez, Yingdong Lu, Tomasz Nowicki},pdfsubject={Convex equilibrium networks and a posteriori error bounds},pdfkeywords={equilibrium networks, convexity, Monge-Ampere, curvature, a posteriori bounds}}
\begin{document}
\ifreviewnotes

\hypersetup{pageanchor=true}
\fi
\setcounter{page}{1}
\maketitle
\begingroup
\renewcommand{\thefootnote}{\fnsymbol{footnote}}
\footnotetext[2]{corresponding author.}
\endgroup
\begin{abstract}
Equilibrium neural networks define their hidden state as the fixed point of a contraction, so their effective depth is set at evaluation time rather than by the architecture. Using them for fully nonlinear PDEs raises two issues that explicit networks avoid: second derivatives may grow with the effective depth, and the potential is only ever evaluated approximately. We study nonnegative softplus equilibrium networks with contraction factor $q<1$, a convex class suited to the Monge--Amp\`ere equation. A positivity cancellation in the resolvent shows that the Hessian grows at most like $(1-q)^{-2}$, rather than the $(1-q)^{-3}$ of a direct estimate, and an explicit one-neuron family shows that this exponent is sharp; third derivatives satisfy a sharp $(1-q)^{-3}$ bound. Residual-based a posteriori estimates then control the state, its input derivatives and, in two dimensions, the Hessian determinant at an approximate equilibrium. Combined with interval arithmetic on complete covers of the domain, they yield rigorous uniform bounds on the Monge--Amp\`ere residual and, through a comparison argument, on the solution error of a fixed trained network. On two manufactured Dirichlet problems on the unit disk, all six trained models are certified with residual below $0.25$ and boundary error below $0.01$. Adaptive stopping halves float64 evaluation time at best, whereas certified adaptive evaluation remains costlier than fixed depth.
\end{abstract}
\noindent\textbf{Keywords:} equilibrium neural networks; input convex neural networks; Monge--Amp\`ere equation; curvature bounds; a posteriori error estimation.

\section{Introduction}
\label{sec:intro}

Let $\Omega\subset\Real^n$ be a bounded domain, and let $f:\Omega\to[0,\infty)$ and $g:\partial\Omega\to\Real$ be given. The Dirichlet problem for the Monge--Amp\`ere equation asks for a convex function $u^*:\Omega\to\Real$ such that
\begin{equation}\label{eq:pde}
\det D^2u^*=f\quad\text{in }\Omega,\qquad u^*=g\quad\text{on }\partial\Omega.
\end{equation}
Convexity is not a technicality here; it is part of what a solution means. Without it the equation is not elliptic, uniqueness fails, and the comparison principle on which the theory rests is lost \citep{gutierrez2016}. A numerical method must therefore either enforce convexity or recover it in the limit.

Neural networks can enforce it by construction. An input convex neural network \citep{amos2017} composes nonnegative linear maps with convex, nondecreasing activations, so every member of the class is convex in its input. \citet{nystrom2023} train such networks on the residual of \eqref{eq:pde} and estimate their error through the stability of the equation. We keep the same principle but let the depth of the network be decided by a fixed-point iteration rather than by the architecture. For each input $x$, with the parameters held fixed, we iterate
\[
z_{\ell+1}(x)=\phi(Wz_\ell(x)+Ax+b),\qquad z_0(x)=0,
\]
where $\phi(t)=\log(1+e^t)$ is the softplus function applied componentwise. Each step is a layer of an input convex network, and all layers share the parameters $(W,A,b)$. When $W\ge0$ and $\norm{W}_\infty\le q$ for some real number $0<q<1$, the iteration is a contraction and converges to a unique equilibrium $z^*(x)$, which defines the potential
\begin{equation}\label{eq:model}
\begin{aligned}
z^*(x) &= \phi(Wz^*(x)+Ax+b),\\
u_\theta(x) &= w^\top z^*(x)+a^\top x+c,
\end{aligned}
\end{equation}
where $\theta$ collects the parameters which will be formally defined in~\eqref{eqn:parameters}. With $w\ge0$, the potential $u_\theta$ is convex as the limit of convex iterates.

Such an equilibrium network is a weight-tied network of unbounded depth. Its parameter count does not depend on depth, and the number of iterations can be chosen at evaluation time, point by point, to trade accuracy against cost. The contraction gap $\delta=1-q$ sets the effective depth: the iteration error decays like $q^\ell$, so each additional digit of accuracy costs about $\delta^{-1}\log10$ iterations.

This flexibility has a price, and the Monge--Amp\`ere equation exposes it. The operator is the determinant of the Hessian, so whatever happens to second derivatives is amplified in the residual. Two things can go wrong. The first is curvature. As $q\to1$ the network becomes effectively deeper, and a direct estimate allows its Hessian to grow like $\delta^{-3}$, precisely in the regime where the implicit model differs most from a shallow explicit one. The second is evaluation. The equilibrium is only ever computed approximately, so a determinant evaluated at a computed point describes an approximation of the network rather than the network itself. Neither issue arises for an explicit network, and neither is settled by convexity and contraction alone.

It is useful to separate three errors. The \emph{evaluation error} compares a numerical evaluation with the exact equilibrium potential at fixed parameters. The \emph{PDE residual} compares the exact determinant $\det D^2u_\theta$ with the prescribed right-hand side $f$. The \emph{solution error} compares $u_\theta$ with $u^*$. Reducing the first does not eliminate the second, and a pointwise bound on the second does not control the third uniformly. A guarantee about $u_\theta$ as an approximation of $u^*$ must pass through all three.

This paper follows that chain. We first show that curvature is better behaved than the direct estimate suggests. A positivity cancellation,
\begin{equation}\label{eq:introcancellation}
(I-\diag(\phi'(s))W)^{-1}\phi''(s)\le\one,
\end{equation}
reflects the fact that a softplus unit with slope close to one has small intrinsic curvature. It yields a Hessian bound of order $\delta^{-2}$, and an explicit one-neuron family shows that this exponent cannot be improved. Third derivatives obey a sharp bound of order $\delta^{-3}$, and positive weighted norms extend the analysis to recurrent matrices with spectral radius below one. We then turn to evaluation. Residual-based a posteriori estimates bound the error in the state, in its first and second input derivatives and, in two dimensions, in the Hessian determinant, at any approximate equilibrium. They apply whether the derivatives come from a finite unroll or from approximate linear solves, and they account for the perturbation of the coefficients themselves caused by the state error. Finally, we pass from points to the whole domain. Interval arithmetic on complete covers of the disk and its boundary gives rigorous uniform bounds on the Monge--Amp\`ere residual, and a comparison argument converts them into a bound on the solution error.

We test this chain on two manufactured Dirichlet problems on the unit disk, with three trained networks each. All six networks are certified with residual below $0.25$ and boundary error below $0.01$. The experiments also measure how much adaptive evaluation saves and how much certification costs.

\subsection{Related work}
\paragraph{Convex neural representations.}
Input convex neural networks (ICNNs) enforce convexity through nonnegative hidden weights and convex, nondecreasing activations \citep{amos2017}. Their construction follows standard composition rules for convex functions \citep{boyd2004}. Convex neural potentials also arise in optimal transport: \citet{makkuva2020} learn transport maps through gradients of ICNN potentials. These applications motivate control of derivatives as well as function values. Our model retains the positivity structure of an ICNN but defines its hidden state implicitly. Convexity follows from the established ICNN argument applied to successive fixed-point iterates; the curvature estimates below address the additional effect of the equilibrium resolvent.

\paragraph{Equilibrium models and implicit differentiation.}
Deep equilibrium models replace a finite stack of layers by a fixed-point equation and differentiate through its equilibrium \citep{bai2019}. Monotone operator equilibrium networks provide a different route to well-posedness and convergent equilibrium solvers \citep{winston2020}. Non-Euclidean contraction analysis gives further conditions for existence, uniqueness and robustness of implicit networks \citep{jafarpour2021}. Well-posedness conditions of Perron--Frobenius type for implicit models are given by \citet{elghaoui2021}. Our nonnegative recurrent matrices admit a particularly simple positive weighted-norm formulation. For input derivatives, \citet[Proposition 1, equations (8)--(9)]{huang2024} derive first- and second-order identities for a general contractive fixed-point map. The identities used here specialize that analysis. Our a posteriori estimates additionally account for the perturbation of their coefficients when the equilibrium state is computed only approximately.

\paragraph{Monge--Amp\`ere approximation.}
The analytical theory of the Monge--Amp\`ere equation treats convexity as part of the solution concept and supplies comparison and stability tools \citep{gutierrez2016}. In numerical analysis, \citet{froese2011} construct monotone finite-difference schemes converging to viscosity solutions. Neural approaches provide a different approximation framework. Physics-informed neural networks and the deep Galerkin method train neural representations using differential equations and boundary or initial conditions \citep{raissi2019,sirignano2018}. For the Dirichlet Monge--Amp\`ere problem specifically, \citet{nystrom2023} study ICNN approximations, derivative estimates and stability. Our experiments use a loss of the same form, with an equilibrium representation.

\paragraph{Curvature and error certification.}
\citet{entesari2024} derive compositional curvature bounds for explicit neural networks, and \citet[Lemma 6.3]{nystrom2023} bound derivatives up to third order for explicit softplus ICNNs. For neural PDE approximations, \citet{mishra2023} relate generalization error to training error and sampling through stability properties of the underlying equation, while \citet{eiras2024} develop regional error certification for physics-informed neural networks. We distinguish the error of evaluating an implicit potential from its PDE residual and its error relative to the PDE solution. The positive softplus resolvent yields the cancellation responsible for the inverse-quadratic curvature bound; residual estimates then propagate state and derivative errors to the determinant. Ball arithmetic \citep{johansson2017} encloses rounding effects in the regional implementation.

\subsection{Organization and scope}
Sections~\ref{sec:prelim}--\ref{sec:divhession} introduce the equilibrium representation and the derivative identities needed for the analysis. Section~\ref{sec:curvature} proves the curvature cancellation, the inverse-quadratic bound and its optimality, and states the sharp third-derivative extension. Section~\ref{sec:certificate} develops the approximate-state certificate; Section~\ref{sec:regional} extends it to the full domain and to solution-error bounds. Sections~\ref{sec:procedure}--\ref{sec:numerics} describe adaptive evaluation and the six-model study. The appendices contain supporting proofs, the weighted-norm and local-envelope extensions, and finite-unroll derivative recurrences. The guarantees concern evaluation of fixed trained models; they do not establish convergence of training or superiority to explicit ICNNs.

\section{Assumptions, notation and the equilibrium}
\label{sec:prelim}

The representation analysis applies to $x\in\R^n$ with $m$ hidden units representing $z^*$. The determinant certificate and the experiments are stated for $n=2$. Parameters have sizes
\begin{align}
\label{eqn:parameters}
W\in\R^{m\times m},\quad A\in\R^{m\times n},\quad
b,w\in\R^m,\quad a\in\R^n,\quad c\in\R.
\end{align}
Throughout, parameters are fixed when differentiating in $x$, and there exists a real number $q\in (0,1)$ such that 
\begin{equation}\tag{H}\label{eq:hypotheses}
W\ge0,\qquad w\ge0,\qquad \norm{W}_\infty\le q.
\end{equation}
Inequalities between vectors or matrices are entrywise unless stated otherwise. The norm $\norm{M}_\infty$ is the maximum absolute row sum; $\norm{M}_2$ is the spectral norm. Set
\begin{equation}\label{eq:constants}
\delta=1-q,\qquad K_A:=\max_{1\le r\le m}\norm{A_{r,:}}_2.
\end{equation}
The input matrix and affine readout are unrestricted in sign. The Hadamard product (or the entrywise product) is denoted by $\odot$, and powers of vectors are also defined entrywisely.

Softplus $\phi$ is smooth, convex and $1$-Lipschitz, with $0<\phi'<1$ and $0<\phi''\le1/4$. Under \eqref{eq:hypotheses}, the state map~\eqref{eq:model} is a contraction. It has a unique equilibrium, and the ICNN composition argument implies convexity of the equilibrium potential $u_{\theta}$. For an approximate state $\hat z$, we have,
\[
\norm{z^*(x)-\hat z}_\infty\le\rho_z/\delta,
\qquad \rho_z=\norm{\hat z-\phi(W\hat z+Ax+b)}_\infty.
\]
The precise statements and proofs are given in Appendix~\ref{app:foundations}.

\section{Implicit derivatives and positive Hessian structure}
\label{sec:divhession}
At the exact equilibrium define
\begin{equation}\label{eq:notations}
\begin{gathered}
s=Wz^*+Ax+b,\quad
\gamma=\phi'(s)\odot(\one-\phi'(s)), \quad \Psi=\diag(\phi'(s)),\\
B=I-\Psi W,\qquad S=B^{-1},\qquad V=(I-W\Psi)^{-1}A.
\end{gathered}
\end{equation}
Write $A_{:,i}$ for column $i$ of $A$, $J_i=\frac{\partial}{\partial x_i} z^*(x)$, $K_{ij}=\frac{\partial^2}{\partial x_i \partial x_j}z^*(x)$ and $J=[J_1,\ldots,J_n]$. Note that the order of $\Psi W$ and $W\Psi$ in \eqref{eq:notations} matters.

The equilibrium is smooth, and implicit differentiation gives
\[
BJ_i=\Psi A_{:,i},\qquad BK_{ij}=\gamma\odot v_i\odot v_j,
\qquad v_i=A_{:,i}+WJ_i=V_{:,i}.
\]
The resolvent satisfies $S\ge I$ and $\norm S_\infty\le\delta^{-1}$. In particular, with $\beta=(S^\top w)\odot\gamma$,
\[
H:=D^2u_\theta=V^\top\diag(\beta)V\succeq0,
\qquad D^2z^*[h,h]=S[\gamma\odot(Vh)^2].
\]
These identities expose the positive structure used in the curvature argument. Appendix~\ref{app:foundations} gives the derivative derivations, the strict-convexity criterion and the two-dimensional determinant identity.

\section{A curvature bound with an optimal gap exponent}\label{sec:curvature}
\begin{lemma}[Resolvent--curvature cancellation]\label{lem:cancel}
Under assumption~\eqref{eq:hypotheses},
\begin{equation}\label{eq:cancel}
0\le S\gamma\le\one.
\end{equation}
\end{lemma}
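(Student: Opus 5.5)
We need $S\gamma\le\one$ where $S=(I-\Psi W)^{-1}$ and $\gamma=\psi\odot(\one-\psi)$ with $\psi=\phi'(s)\in(0,1)^m$, $\Psi=\diag(\psi)$. The key identity is
\[
\gamma=\psi-\Psi\psi=\Psi(\one-\psi),
\]
and the plan is to compare $\gamma$ with $B\one=\one-\Psi W\one$. Then $S\gamma\le\one$ becomes $\gamma\le B\one$ after applying the positive operator $S$.

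\textbf{Nonnegativity.} Since $W\ge0$ and $0\le\Psi\le I$, we have $\norm{\Psi W}_\infty\le q<1$, so the Neumann series $S=\sum_{k\ge0}(\Psi W)^k$ converges and every term is entrywise nonnegative. Hence $S\ge0$, indeed $S\ge I$ as recorded in Section~\ref{sec:divhession}. Because $\gamma\ge0$, it follows that $S\gamma\ge0$. The same positivity shows that $S$ preserves entrywise inequalities: if $u\le v$ then $Su\le Sv$.

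\textbf{Upper bound.} It suffices to show $\gamma\le B\one$, because then $S\gamma\le SB\one=\one$. Componentwise,
\[
(B\one)_r-\gamma_r=1-\psi_r(W\one)_r-\psi_r(1-\psi_r)=(1-\psi_r)^2+\psi_r\bigl(1-(W\one)_r\bigr)-\psi_r(1-\psi_r)+\psi_r(1-\psi_r)-\psi_r(1-\psi_r),
\]
which is cleaner to organize as follows. Using $(W\one)_r\le q\le 1$,
\[
(B\one)_r-\gamma_r\ \ge\ 1-\psi_r-\psi_r(1-\psi_r)=(1-\psi_r)^2\ \ge 0.
\]
Thus $\gamma\le B\one$, and applying $S\ge0$ gives $S\gamma\le\one$.

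The only real point is to recognise that $\one$ is a supersolution for the resolvent, i.e.\ to verify $\gamma\le B\one$. The inequality $\psi(1-\psi)\le 1-\psi$ makes that step immediate, so no step is a genuine obstacle. The argument also shows slightly more: $S\gamma\le\one-S(\one-\psi)^2$. One could try to sharpen this, for instance by using the first-order slack $q$, but that is not needed for \eqref{eq:cancel}.
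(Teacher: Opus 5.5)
Your argument is correct and is essentially the paper's proof: both show $\gamma\le\one-\phi'(s)\le\one-\Psi W\one=B\one$ using $W\one\le q\one\le\one$, and then apply the nonnegative resolvent $S$. One fix is needed: your first displayed identity for $(B\one)_r-\gamma_r$, which you then abandon, is off by $-\psi_r(1-\psi_r)$. The correct identity is $(B\one)_r-\gamma_r=(1-\psi_r)^2+\psi_r(1-(W\one)_r)$, so replace or delete that line rather than leave a false equation in the write-up.
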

\begin{proof}
Since $W\one\le q\one\le\one$, we have
\[
0\le\gamma=\phi'(s)\odot(\one-\phi'(s))\le\one-\phi'(s)
\le\one-\Psi W\one=B\one.
\]
Multiplication by the nonnegative matrix $S$ preserves entrywise inequalities and gives \eqref{eq:cancel}.
\end{proof}

\begin{theorem}[Uniform curvature bound]\label{thm:curvature}
For every $x\in\R^n$ under assumption~\eqref{eq:hypotheses},
\begin{equation}\label{eq:curvature}
\norm{D^2u_\theta(x)}_2\le C_2,
\qquad C_2=\norm{w}_1K_A^2
\min\left\{\delta^{-2},\frac1{4\delta^3}\right\}.
\end{equation}
The bound is uniform in the input and in the bias $b$. Width enters only through the displayed parameter norms.
\end{theorem}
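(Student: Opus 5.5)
The plan is to bound the quadratic form of the Hessian and then use positive semidefiniteness. By Section~\ref{sec:divhession}, $H=D^2u_\theta(x)=V^\top\diag(\beta)V\succeq0$, so $\norm{H}_2=\sup_{\norm h_2=1}h^\top Hh$. The identity $D^2z^*[h,h]=S[\gamma\odot(Vh)^2]$, together with linearity of the readout, gives
\[
h^\top Hh=w^\top S\bigl[\gamma\odot(Vh)^2\bigr].
\]
Every factor here is entrywise nonnegative: $w\ge0$, $S\ge I\ge0$ and $\gamma\ge0$. Hence with $y=(Vh)^2\ge0$ we get $\gamma\odot y\le\norm{y}_\infty\gamma$ entrywise, and the monotonicity of $S$ and $w^\top$ then gives
\[
h^\top Hh\le\norm{Vh}_\infty^2\,w^\top S\gamma .
\]

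Next I bound $\norm{Vh}_\infty$. We have $V=(I-W\Psi)^{-1}A$, and since $0<\phi'<1$ and $W\ge0$, $\norm{W\Psi}_\infty\le\norm{W}_\infty\le q$. A Neumann series then gives $\norm{(I-W\Psi)^{-1}}_\infty\le\delta^{-1}$. By Cauchy--Schwarz, $|(Ah)_r|\le\norm{A_{r,:}}_2\norm h_2\le K_A$, so $\norm{Vh}_\infty\le K_A/\delta$ and $\norm{Vh}_\infty^2\le K_A^2\delta^{-2}$.

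The remaining factor $w^\top S\gamma$ is bounded in two ways.
\begin{itemize}
\item Lemma~\ref{lem:cancel} gives $S\gamma\le\one$, so $w^\top S\gamma\le w^\top\one=\norm w_1$. This yields $h^\top Hh\le\norm w_1K_A^2\delta^{-2}$.
\item Alternatively, $\gamma\le\tfrac14\one$ and $\norm S_\infty\le\delta^{-1}$ give $S\gamma\le\tfrac1{4\delta}\one$. Hence $w^\top S\gamma\le\norm w_1/(4\delta)$, which yields $\norm w_1K_A^2/(4\delta^3)$.
\end{itemize}
Taking the smaller bound and the supremum over unit $h$ gives \eqref{eq:curvature}.

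Uniformity in $x$ and $b$ holds because only $\norm{W}_\infty$, $K_A$, $\norm w_1$ and the universal bounds $0<\phi'<1$, $\gamma\le\tfrac14$ were used, never the value of $s$. The only step needing care is the entrywise majorization $\gamma\odot y\le\norm y_\infty\gamma$ followed by the cancellation. This is where the naive estimate $\norm S_\infty\norm\gamma_\infty$ would cost an extra $\delta^{-1}$, and Lemma~\ref{lem:cancel} removes it. Everything else is routine norm bookkeeping.
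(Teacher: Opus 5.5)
Your proposal is correct and matches the paper's proof essentially step for step. It reduces to the quadratic form via $H\succeq0$, bounds $\norm{Vh}_\infty\le K_A/\delta$, uses entrywise majorization followed by the cancellation $S\gamma\le\one$ for the $\delta^{-2}$ branch, and uses $\norm{S}_\infty\le\delta^{-1}$ with $\gamma\le\tfrac14\one$ for the $\tfrac{1}{4\delta^3}$ branch, with slightly more explicit bookkeeping than the paper (why $\norm{W\Psi}_\infty\le q$, and why $S\gamma\le\tfrac{1}{4\delta}\one$ needs $S\ge0$).
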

\begin{proof}
For a Euclidean unit vector $h$, $\norm{Ah}_\infty\le K_A$. Since $\norm{(I-W\Psi)^{-1}}_\infty\le\delta^{-1}$,
\[
\norm{Vh}_\infty\le K_A/\delta.
\]
Positivity and Lemma~\ref{lem:cancel} imply
\[
0\le D^2z^*[h,h]
=S[\gamma\odot(Vh)^2]
\le \frac{K_A^2}{\delta^2}S\gamma
\le\frac{K_A^2}{\delta^2}\one.
\]
Multiply by $w^\top$ and take the supremum over unit $h$. Because $H\succeq0$, this supremum equals $\norm{H}_2$. Alternatively, use $\norm{S}_\infty\le\delta^{-1}$ and $\norm{\gamma}_\infty\le1/4$ separately to obtain $\norm{H}_2\le\norm{w}_1K_A^2/(4\delta^3)$. Taking the smaller estimate proves \eqref{eq:curvature}.
\end{proof}
\begin{remark}
The cancellation avoids multiplying a worst-case resolvent norm by a worst-case activation curvature independently. A softplus unit with slope close to one has small intrinsic curvature. The quadratic estimate is the smaller one precisely when $\delta<1/4$, or $q>3/4$. For $q\le3/4$ the direct cubic estimate may be tighter. The norms $\norm{w}_1$ and $K_A$ must remain in the statement: contraction alone does not uniformly bound the Hessian over arbitrary readouts and input weights.
\end{remark}

\begin{theorem}[Optimality of the exponent]\label{thm:sharpness}
For $0<\eps<1$, take one input and one hidden unit with
\[
W=q=1-\eps,\quad A=w=1,\quad a=c=0,\quad
b_\eps=\log(1-\eps)-\eps\log\eps.
\]
At $x=0$,
\begin{equation}\label{eq:sharpness}
z^*(0)=-\log\eps,\qquad
u_\eps''(0)=\frac{1-\eps}{\eps^2(2-\eps)^3},\qquad
\lim_{\eps\downarrow0}\eps^2u_\eps''(0)=\frac18.
\end{equation}
Thus no bound $\norm{D^2u_\theta}_2\le C\norm{w}_1K_A^2(1-q)^{-p}$ with $p<2$ and a constant $C$ independent of the admissible parameters holds for this class.
\end{theorem}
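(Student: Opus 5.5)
We verify the three displayed identities by direct substitution and then deduce the optimality claim from them.

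\emph{Step 1: the equilibrium.} With one unit, the fixed-point equation at $x=0$ reads $z=\phi(qz+b_\eps)$, and by the contraction property of Section~\ref{sec:prelim} it has exactly one solution. So it suffices to check that $z=-\log\eps$ solves it. Then $qz+b_\eps=-(1-\eps)\log\eps+\log(1-\eps)-\eps\log\eps=\log\bigl((1-\eps)/\eps\bigr)$. Hence $e^{s}=(1-\eps)/\eps$ and $\phi(s)=\log\bigl(1+(1-\eps)/\eps\bigr)=-\log\eps=z$, as required. This also records $\phi'(s)=e^s/(1+e^s)=1-\eps$.

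\emph{Step 2: the second derivative.} Apply the scalar form of the identities in Section~\ref{sec:divhession}. We have $\Psi=1-\eps$, $\gamma=\eps(1-\eps)$ and $B=1-(1-\eps)^2=\eps(2-\eps)$, so $S=1/(\eps(2-\eps))$. Also $V=A/(1-W\Psi)=1/(\eps(2-\eps))$, since $W\Psi=\Psi W$ for scalars. Then $\beta=S\gamma=(1-\eps)/(2-\eps)$, and
\[
u_\eps''(0)=\beta V^2=\frac{1-\eps}{2-\eps}\cdot\frac{1}{\eps^2(2-\eps)^2}=\frac{1-\eps}{\eps^2(2-\eps)^3}.
\]
Multiplying by $\eps^2$ and letting $\eps\downarrow0$ gives $1/8$, which establishes \eqref{eq:sharpness}.

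\emph{Step 3: optimality.} The parameters are admissible under \eqref{eq:hypotheses} with $q=1-\eps$, $\delta=\eps$, $\norm{w}_1=1$ and $K_A=1$. Suppose a bound $u_\eps''(0)\le C\eps^{-p}$ held with $p<2$ and $C$ independent of the parameters. Then $\eps^2u_\eps''(0)\le C\eps^{2-p}\to0$, which contradicts the limit $1/8$. The only delicate point is the choice of $b_\eps$, which places the equilibrium where $\phi'(s)=1-\eps$. There the product $\gamma S\approx1/2$ stays of order one, illustrating that Lemma~\ref{lem:cancel} is attained, while $V\sim(2\eps)^{-1}$ gives the $\eps^{-2}$ growth. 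Everything else is routine algebra, and Step~1 is the only computation needing care.
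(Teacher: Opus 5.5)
Your proof is correct and matches the paper's. The paper makes the same check that $z=-\log\eps$ gives $\phi'(s)=1-\eps$, computes the same scalar second derivative, and reaches the same $\eps^{2-p}\to0$ contradiction; you read the derivative off $H=V^\top\diag(\beta)V$ while the paper uses \eqref{eq:firstsystem}--\eqref{eq:secondsystem}, but both reduce to $u''=S\gamma V^2$. One small correction to your closing aside: $S\gamma=(1-\eps)/(2-\eps)\to1/2$, so Lemma~\ref{lem:cancel} is attained only up to a factor of two, which does not affect the argument.
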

\begin{proof}
With $z=-\log\eps$, the preactivation is
\[
s=(1-\eps)(-\log\eps)+b_\eps
=\log\frac{1-\eps}{\eps}.
\]
Then $\phi(s)=-\log\eps=z$ and $\phi'(s)=1-\eps$. Uniqueness identifies this state with $z^*(0)$. For a scalar input, \eqref{eq:firstsystem} gives $z'=\phi'(s)/(1-q\phi'(s))$, hence $s'=qz'+1=1/(1-q\phi'(s))$. Equation~\eqref{eq:secondsystem} yields
\[
z''=\frac{\phi'(s)(1-\phi'(s))}{(1-q\phi'(s))^3}.
\]
Substitute $q=\phi'(s)=1-\eps$ and $1-q\phi'(s)=\eps(2-\eps)$ to obtain \eqref{eq:sharpness}. If a bound with exponent $p<2$ held, then $\eps^2u_\eps''(0)\le C\eps^{2-p}\to0$, contradicting the limit $1/8$.
\end{proof}

\begin{remark}[Two-dimensional embedding and meaning of sharpness]
Two independent copies with $W=(1-\eps)I_2$, $A=I_2$ and $w=(1,1)^\top$ give a strictly convex two-dimensional potential with
\[
D^2u_\eps(0)=u_\eps''(0)I_2,\qquad
\det D^2u_\eps(0)\sim\frac{1}{64\eps^4}.
\]
Thus the curvature scaling also occurs within the two-dimensional class. The family varies parameters with $\eps$ and is not a training trajectory. Theorem~\ref{thm:sharpness} proves optimality of the exponent; it does not prove optimality of the constant in Theorem~\ref{thm:curvature}.
\end{remark}

\begin{figure}[!htbp]
\centering\includegraphics[width=\linewidth]{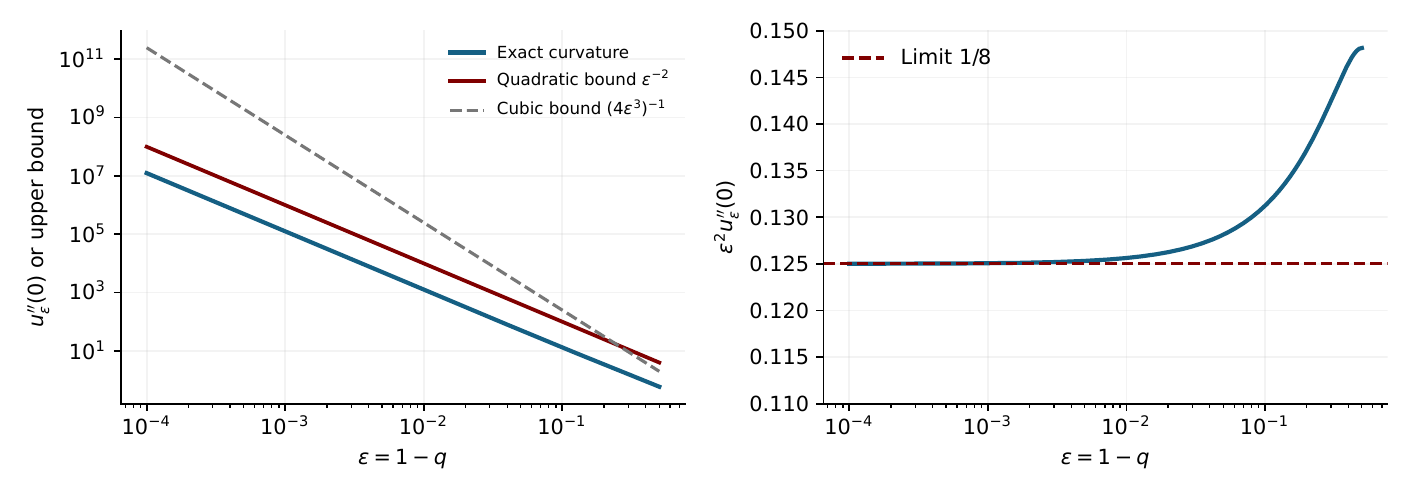}
\caption{Analytic sharpness family from Theorem~\ref{thm:sharpness}. Left: exact curvature and the two upper bounds for $A=w=1$. Right: the rescaled curvature tends to $1/8$ as the contraction gap decreases. Curves are evaluations of the displayed closed forms.}
\label{fig:sharpness}
\end{figure}

\subsection{Sharp third-derivative bound}\label{sec:third}
For a scalar trilinear form we use the norm obtained by taking the supremum over three independent Euclidean unit vectors.
\begin{theorem}[Third derivatives]\label{thm:third}
Under assumption~\eqref{eq:hypotheses}, uniformly in $x$,
\[
\norm{D^3u_\theta(x)}\le (1+3q)\norm{w}_1 K_A^3\delta^{-3}.
\]
The inverse-gap exponent three is optimal over this class.
\end{theorem}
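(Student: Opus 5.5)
The plan is to differentiate the equilibrium equation once more than in Section~\ref{sec:divhession}, and to bound every term with Lemma~\ref{lem:cancel} rather than with the crude bound $\norm{S}_\infty\le\delta^{-1}$. Write $z=z^*$ and fix Euclidean unit vectors $h,k,l$. Along any direction the preactivation satisfies $Ds[h]=WDz[h]+Ah=Vh=:v_h$. Differentiating $z=\phi(s)$ three times gives
\[
D^3z[h,k,l]=\phi'''(s)\odot v_h\odot v_k\odot v_l+\phi''(s)\odot\bigl(v_h\odot WD^2z[k,l]+v_k\odot WD^2z[h,l]+v_l\odot WD^2z[h,k]\bigr)+\Psi WD^3z[h,k,l],
\]
where the last term comes from $D^3s=WD^3z$. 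Hence $B\,D^3z[h,k,l]$ equals the first two groups of terms, and applying the nonnegative matrix $S$ gives $D^3z[h,k,l]$ explicitly.

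The key pointwise facts are as follows.
\begin{enumerate}[label=(\roman*)]
\item The derivative $\phi''=\gamma$ and $\phi'''=\phi'(1-\phi')(1-2\phi')=\gamma\odot(\one-2\phi')$, so $|\phi'''|\le\gamma$ entrywise.
\item As in the proof of Theorem~\ref{thm:curvature}, $\norm{v_h}_\infty\le K_A/\delta$.
\item For independent directions, $D^2z[k,l]=S[\gamma\odot v_k\odot v_l]$. This is the polarized form of the identity $BK_{ij}=\gamma\odot v_i\odot v_j$. Since $S\ge0$ and by Lemma~\ref{lem:cancel}, $|D^2z[k,l]|\le (K_A/\delta)^2S\gamma\le K_A^2\delta^{-2}\one$. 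Because $W\ge0$ with $W\one\le q\one$, this gives $|WD^2z[k,l]|\le qK_A^2\delta^{-2}\one$.
\end{enumerate}
Using $S\ge0$ again, the first term contributes at most $(K_A/\delta)^3S\gamma\le K_A^3\delta^{-3}\one$. Each of the three mixed terms is bounded by $S[\gamma]\cdot(K_A/\delta)\cdot qK_A^2\delta^{-2}\le qK_A^3\delta^{-3}\one$. Therefore $|D^3z[h,k,l]|\le(1+3q)K_A^3\delta^{-3}\one$. Since $D^3u_\theta=w^\top D^3z$ and the affine readout contributes nothing, Hölder's inequality with $w\ge0$ gives the claimed bound. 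Taking the supremum over $h,k,l$ finishes the upper bound. The essential point is that the cancellation is used twice: once for the outer resolvent, and once inside $D^2z$. Estimating either factor with $\norm{S}_\infty$ would cost an extra $\delta^{-1}$.

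For optimality I would reuse the one-neuron family of Theorem~\ref{thm:sharpness}. At $x=0$ it has $\phi'(s)=q=1-\eps$, $s'=1/(\eps\eta)$ with $\eta=2-\eps$, and $S=1/(\eps\eta)$. From that theorem, $z''=(1-\eps)/(\eps^2\eta^3)$. Also $\gamma=\eps(1-\eps)$ and $\phi'''=\eps(1-\eps)(2\eps-1)$. The scalar formula reads
\[
z'''=S\bigl[\phi'''(s')^3+3\gamma s'qz''\bigr].
\]
The first term behaves like $-1/(8\eps^2)$ and the second like $+3/(16\eps^2)$, so the bracket is about $1/(16\eps^2)$ and $z'''\sim 1/(32\eps^3)$. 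I expect the main obstacle to be exactly this verification: the two terms have opposite signs and could in principle cancel at leading order. I would compute the exact closed form in $\eps$ to confirm that $\lim_{\eps\downarrow0}\eps^3u_\eps'''(0)=1/32\neq0$. If there were unexpected cancellation, I would instead shift $b$ so that $\phi'(s)$ stays bounded away from $1/2$ while $q\to1$, making the first term dominate. Given a nonzero limit, any bound $C\norm{w}_1K_A^3\delta^{-p}$ with $p<3$ would force $\eps^3u_\eps'''(0)\le C\eps^{3-p}\to0$, a contradiction.
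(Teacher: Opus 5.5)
Your proposal is correct and follows the paper's proof essentially step for step. It uses the same differentiated system $B\,D^3z=\phi'''(s)\odot v_h\odot v_k\odot v_l+\gamma\odot(\cdots)$, the bound $|\phi'''|\le\gamma$, the double use of $S\gamma\le\one$ (inside $D^2z$ and for the outer resolvent), and the same one-neuron family for sharpness. Your leading-order computation ($-\tfrac18+\tfrac3{16}=\tfrac1{16}$ in the bracket) already gives the limit $1/32$, which matches the paper's closed form $u'''(0)=(1-\eps)(1-\eps+\eps^2)/(\eps^3(2-\eps)^5)$. The fallback of shifting $b$ is therefore unnecessary.
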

\noindent The proof is given in Appendix~\ref{app:thirdproof}.

\paragraph{Extension to weighted norms.}
The positive cancellation also gives a curvature estimate in a weighted norm whenever a positive vector $\eta$ satisfies $W\eta\le q\eta$. Such a vector exists for every $\rho(W)<q<1$. Appendix~\ref{sec:weighted} states and proves the extension, including the normalization and the spectral-radius endpoint caveat. The numerical experiments use the unweighted row norm.

\section{A posteriori evaluation from an approximate state}\label{sec:certificate}

Fix $x\in\R^2$ and parameters satisfying \eqref{eq:hypotheses}.
We consider numerical approximations to the equilibrium state
and its first and second derivatives with respect to the input:
\[
\hat z\approx z^*(x),\qquad
\hat J_i\approx \partial_{x_i}z^*(x),\qquad
\hat K_{ij}\approx \partial_{x_i}\partial_{x_j}z^*(x),
\qquad i,j\in\{1,2\}.
\]
Here, $\hat J_i$ denotes the $i$th column of an approximate
Jacobian and approximates the exact derivative $J_i=\partial_{x_i}z^*(x)$
defined earlier. We assume $\hat K_{12}=\hat K_{21}$.

For example, $\hat z$ may be obtained by terminating the
equilibrium iteration after finitely many steps, while
$\hat J_i$ and $\hat K_{ij}$ may be obtained by approximately
solving the linear systems arising from implicit differentiation.
The estimates below concern numerical errors in evaluating
a fixed neural representation, rather than the approximation
error between $u_\theta$ and the PDE solution $u^*$.
The approximate quantities need not solve their defining
equations exactly and need not have been produced by the
same algorithm.

Define
\begin{equation}\label{eq:hatdefs}
\begin{gathered}
\hat s=W\hat z+Ax+b,\quad
\hat \Psi=\diag(\phi'(\hat s)),\quad
\hat\gamma=\phi''(\hat s),\quad
\hat B=I-\hat \Psi W,\\
\hat v_i=A_{:,i}+W\hat J_i,\qquad
U_i=\norm{\hat v_i}_\infty.
\end{gathered}
\end{equation}
The residuals, all computed at this same approximate state, are
\begin{align}
\rho_z
&=\norm{\hat z-\phi(\hat s)}_\infty,
\label{eq:rhozero}\\
\rho_i
&=\norm{\hat B\hat J_i-\hat \Psi A_{:,i}}_\infty,
\label{eq:rhofirst}\\
\rho_{ij}
&=\norm{\hat B\hat K_{ij}
-\hat\gamma\odot\hat v_i\odot\hat v_j}_\infty.
\label{eq:rhosecond}
\end{align}
Introduce the following nonnegative quantities:
\begin{align}
E_z&=\rho_z/\delta,
&\alpha&=qE_z/4,
\label{eq:ezalpha}\\
E_i&=(\rho_i+\alpha U_i)/\delta,
&\Delta_i&=qE_i,
\label{eq:ei}\\
Q_{ij}
&=\alpha U_iU_j
+\tfrac14(\Delta_iU_j+U_i\Delta_j+\Delta_i\Delta_j),
\label{eq:qij}\\
E_{ij}
&=\frac{\rho_{ij}+Q_{ij}
+\alpha q\norm{\hat K_{ij}}_\infty}{\delta}.
\label{eq:eij}
\end{align}
All these expressions use the approximate quantities,
parameters and a valid contraction bound $q$.

\begin{theorem}[State-aware derivative certificate]\label{thm:certificate}
Under the preceding assumptions,
\begin{equation}\label{eq:derivativeerrors}
\norm{z^*-\hat z}_\infty\le E_z,\quad
\norm{J_i-\hat J_i}_\infty\le E_i,\quad
\norm{K_{ij}-\hat K_{ij}}_\infty\le E_{ij}.
\end{equation}
For $\hat H_{ij}=w^\top\hat K_{ij}$,
\begin{equation}\label{eq:hessianerror}
\norm{H-\hat H}_2\le E_H,
\qquad E_H=\norm{w}_1\sqrt{E_{11}^2+2E_{12}^2+E_{22}^2}.
\end{equation}
The value approximation $\hat u=w^\top\hat z+a^\top x+c$ satisfies $|u_\theta(x)-\hat u|\le\norm{w}_1E_z$.
\end{theorem}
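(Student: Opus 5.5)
The plan is to prove the three bounds in \eqref{eq:derivativeerrors} in order: state, Jacobian columns, second derivatives. At each level I write the exact defining equation minus the approximate one and isolate an error term of the form $B(\cdot)$ or $\hat B(\cdot)$. That operator is then inverted using the row-norm estimate $\norm{(I-DW)^{-1}}_\infty\le\delta^{-1}$. This estimate holds for any diagonal $0\le D\le I$, by a Neumann series with $\norm{DW}_\infty\le q$.

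The state bound $E_z$ is the contraction estimate already recorded in Section~\ref{sec:prelim}. It follows from
\[
z^*-\hat z=\phi(s)-\phi(\hat s)+(\phi(\hat s)-\hat z),
\]
the $1$-Lipschitz property and $\norm{W}_\infty\le q$. From it, $\norm{s-\hat s}_\infty\le qE_z$. Since $0<\phi''\le1/4$ and $|\phi'''|\le 1/4$ (the bound $|\phi'''|\le1/(6\sqrt3)<1/4$ would be checked), the mean value theorem gives
\[
\norm{\Psi-\hat\Psi}_\infty\le\alpha,\qquad \norm{\gamma-\hat\gamma}_\infty\le\alpha.
\]
This is exactly where $\alpha=qE_z/4$ comes from.

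For the Jacobian, subtract to get
\[
\hat B(J_i-\hat J_i)=(\hat\Psi A_{:,i}-\hat B\hat J_i)+(\Psi-\hat\Psi)(A_{:,i}+WJ_i)-\ldots
\]
The cleanest arrangement is to write $BJ_i=\Psi A_{:,i}$ as $J_i=\Psi v_i$. Then
\[
\hat B(J_i-\hat J_i)=-r_i+(\Psi-\hat\Psi)v_i,
\]
where $r_i$ is the residual from \eqref{eq:rhofirst}. The obstacle is that $v_i$ is exact, not the computable $\hat v_i$. I would instead use the operator $B$:
\[
B(J_i-\hat J_i)=\Psi A_{:,i}-\hat B\hat J_i-(\hat\Psi-\Psi)W\hat J_i=-r_i+(\Psi-\hat\Psi)\hat v_i.
\]
The last term is bounded by $\alpha U_i$. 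Inverting $B$ gives $E_i$, and then $\norm{v_i-\hat v_i}_\infty\le qE_i=\Delta_i$.

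For the second derivatives I use the same trick:
\[
B(K_{ij}-\hat K_{ij})=\gamma\odot v_i\odot v_j-\hat B\hat K_{ij}-(\hat\Psi-\Psi)W\hat K_{ij}.
\]
This equals
\[
-r_{ij}+(\gamma\odot v_i\odot v_j-\hat\gamma\odot\hat v_i\odot\hat v_j)+(\Psi-\hat\Psi)W\hat K_{ij}.
\]
The last term is at most $\alpha q\norm{\hat K_{ij}}_\infty$. For the middle term, split it as
\[
(\gamma-\hat\gamma)\hat v_i\hat v_j+\gamma\,(v_iv_j-\hat v_i\hat v_j),
\]
and write
\[
v_iv_j-\hat v_i\hat v_j=(v_i-\hat v_i)\hat v_j+\hat v_i(v_j-\hat v_j)+(v_i-\hat v_i)(v_j-\hat v_j).
\]
With $\gamma\le1/4$ this gives exactly $Q_{ij}$, and inverting $B$ gives $E_{ij}$. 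I expect this bookkeeping to be the main obstacle: choosing the exact operator $B$ on the left, with only computable quantities on the right, so that $\hat B$ never needs inverting with a perturbation argument. The remaining point is the precise constant for $\phi'''$ in the $\gamma$ perturbation, which the factor $1/4$ covers.

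Finally, $H-\hat H$ has entries $w^\top(K_{ij}-\hat K_{ij})$, each bounded by $\norm{w}_1E_{ij}$ by Hölder. The spectral norm of a symmetric $2\times2$ matrix is bounded by its Frobenius norm, which gives $E_H$. The value bound $|u_\theta-\hat u|\le\norm{w}_1E_z$ is again Hölder.
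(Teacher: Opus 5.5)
Your proposal is correct and follows the paper's proof essentially step for step. Like the paper, you keep the exact operator $B$ on the left and use $B=\hat B-(\Psi-\hat\Psi)W$ to get $B(J_i-\hat J_i)=(\Psi-\hat\Psi)\hat v_i-r_i$ and the analogous second-derivative identity. You then bound the nonlinear difference by the same three-term expansion giving $Q_{ij}$ and apply $\norm{S}_\infty\le\delta^{-1}$. The one detail to state explicitly is that the factor $2E_{12}^2$ in $E_H$ relies on the hypothesis $\hat K_{12}=\hat K_{21}$, which makes $\hat H$ symmetric and gives $E_{21}=E_{12}$.
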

\noindent The proof is given in Appendix~\ref{app:certproofs}.

\begin{corollary}[Determinant, residual and convexity bounds]\label{cor:det}
In two dimensions set
\begin{equation}\label{eq:edet}
E_{\det}=2\norm{\hat H}_2E_H+E_H^2.
\end{equation}
Then $|\det H-\det\hat H|\le E_{\det}$. For any prescribed $f(x)$, let $\hat r=|\det\hat H-f(x)|$ and $r=|\det H-f(x)|$. One has
\begin{equation}\label{eq:residualinterval}
\max\{0,\hat r-E_{\det}\}\le r\le\hat r+E_{\det},
\end{equation}
and
\begin{equation}\label{eq:eigenbound}
\lambda_{\min}(H)\ge\lambda_{\min}(\hat H)-E_H.
\end{equation}
In particular, a strictly positive right-hand side in \eqref{eq:eigenbound} certifies positive definiteness at that input.
\end{corollary}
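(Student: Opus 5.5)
The plan is to treat all three claims as consequences of the Hessian bound $\norm{H-\hat H}_2\le E_H$ from Theorem~\ref{thm:certificate}, combined with elementary perturbation facts for symmetric $2\times2$ matrices. Write $\Delta=H-\hat H$. Both $H$ and $\hat H$ are symmetric: $H$ because it is a Hessian, and $\hat H$ because we assumed $\hat K_{12}=\hat K_{21}$. Hence $\Delta$ is symmetric with $\norm{\Delta}_2\le E_H$.

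\textbf{Determinant bound.} In two dimensions I will use the identity
\[
\det(\hat H+\Delta)=\det\hat H+\operatorname{tr}(\cof(\hat H)^\top\Delta)+\det\Delta,
\]
which can be checked directly from the $2\times2$ formula, since the determinant is a quadratic form in the entries. For the linear term, $\cof(\hat H)$ is $\hat H$ conjugated by the rotation by $\pi/2$ in the symmetric case, so $\norm{\cof\hat H}_2=\norm{\hat H}_2$. The trace pairing is then bounded by the nuclear norm of $\Delta$ times the spectral norm of the cofactor. Since $\Delta$ is $2\times2$, its nuclear norm is at most $2\norm{\Delta}_2$, so the linear term is at most $2\norm{\hat H}_2E_H$. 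For the quadratic term, $|\det\Delta|=|\lambda_1\lambda_2|\le\norm{\Delta}_2^2\le E_H^2$. Adding the two gives $|\det H-\det\hat H|\le E_{\det}$.

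An alternative route avoids the cofactor. Let $\mu_i$ be the eigenvalues of $\hat H$ and $\lambda_i$ those of $H$. Weyl's inequality gives $|\lambda_i-\mu_i|\le E_H$. Then
\[
\lambda_1\lambda_2-\mu_1\mu_2=(\lambda_1-\mu_1)\lambda_2+\mu_1(\lambda_2-\mu_2),
\]
whose absolute value is at most $(\norm{\hat H}_2+E_H)E_H+\norm{\hat H}_2E_H$, again equal to $E_{\det}$. I would present this Weyl-based version, since it is cleaner.

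\textbf{Residual interval.} This follows from the reverse triangle inequality. We have $|r-\hat r|=\bigl||\det H-f|-|\det\hat H-f|\bigr|\le|\det H-\det\hat H|\le E_{\det}$. Together with $r\ge0$, this gives \eqref{eq:residualinterval}.

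\textbf{Eigenvalue bound.} The inequality \eqref{eq:eigenbound} is Weyl's inequality $\lambda_{\min}(\hat H+\Delta)\ge\lambda_{\min}(\hat H)-\norm{\Delta}_2$, proved via the Rayleigh quotient: $h^\top Hh\ge h^\top\hat Hh-E_H$ for unit $h$. If the right-hand side is strictly positive, then $H\succ0$ at that input.

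\textbf{Main obstacle.} The only delicate point is getting exactly the constant $2\norm{\hat H}_2$ rather than a cruder one. This requires the symmetry of $\Delta$ and $\hat H$, so that Weyl applies and the eigenvalues are real. That symmetry is exactly why the assumption $\hat K_{12}=\hat K_{21}$ was imposed.
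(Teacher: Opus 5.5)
Your proposal is correct. Your first route is essentially the paper's proof. The paper uses the same expansion $\det(\hat H+E)-\det\hat H=\cof(\hat H):E+\det E$, but it puts the nuclear norm on the cofactor and the spectral norm on $E$. It uses $\norm{\cof\hat H}_*\le2\norm{\hat H}_2$, because the cofactor of a $2\times2$ matrix has the same singular values as the matrix. You do the reverse, and both versions give exactly $2\norm{\hat H}_2E_H+E_H^2$.

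The Weyl route you say you would present is genuinely different: it perturbs eigenvalues instead of expanding the determinant. Its appeal is that one tool yields both \eqref{eq:edet} and \eqref{eq:eigenbound}. However, it needs $H$ and $\hat H$ to be symmetric and the eigenvalues to be paired in sorted order. Once $\norm{H-\hat H}_2\le E_H$ is known, the cofactor argument needs neither. So your closing remark overstates the role of symmetry. Symmetry is essential for \eqref{eq:eigenbound}, and upstream for the formula for $E_H$, but not for the constant $2\norm{\hat H}_2$ in $E_{\det}$.

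The cofactor form is also the one the paper reuses in Appendix~\ref{sec:local}, where the derivative of the determinant pairs $\cof(H)$ with a directional derivative of $H$. Your residual-interval and eigenvalue steps coincide with the paper's.
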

\noindent The proof is given in Appendix~\ref{app:certproofs}.

\paragraph{Why state error cannot be omitted.}
Even exact solves of the systems constructed with $\hat B$, $\hat \Psi $ and $\hat\gamma$ only make $\rho_i$ and $\rho_{ij}$ vanish. They do not replace those coefficients by the exact-state coefficients. The $\alpha$ terms in \eqref{eq:ei}--\eqref{eq:eij} account for this discrepancy. If $q=0$, the derivative coefficients are independent of the state and these terms correctly vanish.

\paragraph{Arithmetic scope.}
Theorem~\ref{thm:certificate} and Corollary~\ref{cor:det} are inequalities for exact real quantities. Treating finite-precision residuals as exact upper bounds is insufficient for a validated machine certificate. Such an implementation must enclose residual evaluation, nonlinear functions, parameter norms, linear algebra and determinant/eigenvalue computations with outward rounding or explicit error bounds. The regional implementation described next supplies such enclosures for the saved model. Float64 diagnostic checks remain distinct from these validated bounds.

\section{Local bounds, complete coverage and solution error}\label{sec:regional}
We extend the pointwise certificate to boxes and combine local guarantees over a complete cover. An alternative analytical construction based on local derivative envelopes is developed in Appendix~\ref{sec:local}; it is separate from the interval-residual method evaluated here.

\subsection{A regional residual route with controlled arithmetic}
The regional method uses the following extension of the approximate-state certificate. It does not require third derivatives or the analytical envelopes of Appendix~\ref{sec:local}.
\begin{proposition}[Uniform certificate on a box]\label{prop:box}
Fix a box $X\subset\R^2$ and constant candidates $\hat z,\hat J_i,\hat K_{ij}$, with symmetric $\hat K$. Suppose certified upper bounds for the residual norms in \eqref{eq:rhozero}--\eqref{eq:rhosecond} hold for every $x\in X$, with the coefficients recomputed at $W\hat z+Ax+b$. Substitution of these upper bounds and outward enclosures of all nonnegative constants into \eqref{eq:ezalpha}--\eqref{eq:edet} gives uniform derivative and determinant error bounds on $X$. In particular, if $F(X)$ encloses $f(X)$ and $\widetilde H_{ij}=w^\top\hat K_{ij}$, then
\[
\sup_{x\in X}|\det D^2u_\theta(x)-f(x)|
\le \sup_{y\in F(X)}|\det\widetilde H-y|+E_{\det}(X).
\]

To preserve rigor in finite-precision arithmetic, the determinant
$\det\widetilde H$ must be evaluated as an interval guaranteed
to contain its exact value. The right-hand side must then
be bounded from above using outward-rounded interval arithmetic,
so that rounding errors cannot lead to an underestimate
of the residual bound.
\end{proposition}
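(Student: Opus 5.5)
The plan is to reduce Proposition~\ref{prop:box} to a pointwise application of Theorem~\ref{thm:certificate} and Corollary~\ref{cor:det} at each $x\in X$, followed by a monotonicity argument that lets the box-wide upper bounds replace the pointwise quantities. Fix $x\in X$. The constant candidates $\hat z,\hat J_i,\hat K_{ij}$ are admissible approximate quantities at $x$. Theorem~\ref{thm:certificate} places no requirement on how they were produced, only that the residuals \eqref{eq:rhozero}--\eqref{eq:rhosecond} be computed with $\hat s=W\hat z+Ax+b$ at this same $x$. This is exactly the recomputation hypothesis in the proposition. Hence \eqref{eq:derivativeerrors}, \eqref{eq:hessianerror} and Corollary~\ref{cor:det} hold at $x$, with the pointwise residuals $\rho_z(x),\rho_i(x),\rho_{ij}(x)$ and with $U_i=\norm{A_{:,i}+W\hat J_i}_\infty$ and $\norm{\hat K_{ij}}_\infty$. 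The latter two do not depend on $x$.

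The key step is monotonicity. The chain \eqref{eq:ezalpha}--\eqref{eq:eij}, then $E_H$, then \eqref{eq:edet} builds each quantity from nonnegative inputs ($\rho$'s, $U_i$, $\norm{\hat K_{ij}}_\infty$, $\norm{w}_1$, $\norm{\hat H}_2$, $q$, $\delta^{-1}$) using only sums, products and the square root. Each of these operations is nondecreasing in every argument on $[0,\infty)$. Suppose $\bar\rho_z\ge\rho_z(x)$, $\bar\rho_i\ge\rho_i(x)$ and $\bar\rho_{ij}\ge\rho_{ij}(x)$ for all $x\in X$, and that the constants are replaced by upper enclosures, with $q$ enlarged and $\delta$ lowered consistently, for instance $\bar q\ge q$ and $\underline\delta=1-\bar q>0$. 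Then the resulting $E_z(X),E_i(X),E_{ij}(X),E_H(X),E_{\det}(X)$ dominate the pointwise values at every $x\in X$. One point to check is that enlarging $q$ keeps the hypothesis \eqref{eq:hypotheses} valid, since $\norm W_\infty\le q\le\bar q<1$. So using a larger $q$ is legitimate and gives a valid instance of the theorem, not merely a numerical upper bound. Here $\norm{\hat H}_2=\norm{\widetilde H}_2$ is constant in $x$.

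For the residual bound, at each $x$ the upper inequality in \eqref{eq:residualinterval} with $f(x)$ gives
\[
|\det D^2u_\theta(x)-f(x)|\le|\det\widetilde H-f(x)|+E_{\det}(x)\le\sup_{y\in F(X)}|\det\widetilde H-y|+E_{\det}(X),
\]
because $f(x)\in F(X)$. Taking the supremum over $x\in X$ concludes. The finite-precision paragraph is a requirement on the implementation, not a mathematical claim. I would only remark that if $[\det\widetilde H]$ is an interval containing the true value and the supremum is bounded using outward rounding, then the computed number is at least the exact right-hand side.

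The main obstacle I anticipate is not the monotonicity but making sure Theorem~\ref{thm:certificate} really applies with constant candidates on the whole box. The residual norms must be those with coefficients $\hat\Psi,\hat\gamma,\hat B$ evaluated at $\hat s(x)$, which vary with $x$. If one used coefficients frozen at the box center, the $\alpha$-terms in the proof of the theorem would not account for the discrepancy. This is why the hypothesis insists on recomputation, and the plan depends on it.
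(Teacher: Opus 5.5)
Your proposal is correct and follows the paper's proof. Both apply Theorem~\ref{thm:certificate} and Corollary~\ref{cor:det} pointwise with the same constant candidates, use monotonicity of the error expressions in their nonnegative inputs (with a strictly positive lower bound for $1-q$), and finish with the triangle inequality using $f(x)\in F(X)$; your remarks that $U_i$, $\norm{\hat K_{ij}}_\infty$ and $\norm{\widetilde H}_2$ do not depend on $x$, and that the coefficients must be recomputed at $W\hat z+Ax+b$, spell out what the paper's short proof leaves implicit.
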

\noindent The proof is given in Appendix~\ref{app:regionalproofs}.

Rigorous evaluation uses Arb ball arithmetic with 128-bit midpoint precision \citep{johansson2017}. Effective binary64 parameters are imported as exact dyadic rationals and define the frozen real model being certified. Positivity and $q<1$ are rechecked. This interpretation does not certify the rounding history of optimization. Nonlinear functions, residuals, coefficient norms and determinants are enclosed; scalar exports to binary64 are rounded outward. The implementation uses $\norm{\widetilde H}_F$ as a safe upper bound for its spectral norm. A reported floating determinant receives a further enclosure of its difference from the exact candidate determinant.

The initial state box $Z=\hat z+[-E_z,E_z]^m$ contains every $z^*(x)$ for $x\in X$. Intersections
$Z\leftarrow Z\cap\phi(WZ+AX+b)$ preserve that property. Verified interval linear solves \citep{rump2010} for first and second derivatives may tighten the result; if a solve cannot be verified, the residual bound is retained. This is the implemented local-box method.

\begin{corollary}[Complete disk and boundary cover]\label{cor:cover}
Let $\{X_j\}$ cover the closed unit disk, with residual bounds $R_j$ from Proposition~\ref{prop:box}. Let $\{I_k\}$ cover $[0,2\pi]$, and suppose $B_k$ bounds $|u_\theta(\cos t,\sin t)-g(\cos t,\sin t)|$ for every $t\in I_k$. Then
\[
\norm{\det D^2u_\theta-f}_{L^\infty(B_1)}\le R:=\max_jR_j,
\qquad \norm{u_\theta-g}_{L^\infty(\partial B_1)}\le B_\partial:=\max_k B_k.
\]
\end{corollary}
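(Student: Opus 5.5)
The corollary follows from the covering property together with Proposition~\ref{prop:box}, so the plan is a pointwise argument followed by taking a supremum. For the interior bound, fix an arbitrary $x$ in the closed unit disk $\overline{B_1}$. Because $\{X_j\}$ covers $\overline{B_1}$, there is an index $j$ with $x\in X_j$. Proposition~\ref{prop:box}, applied on $X_j$ with its constant candidates and certified residual upper bounds, gives $|\det D^2u_\theta(x)-f(x)|\le R_j\le R$. Since $x$ was arbitrary, the supremum over $B_1$ is at most $R$. This controls the $L^\infty(B_1)$ norm, because an essential supremum never exceeds a pointwise supremum.

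Two details need a line each. First, $R_j$ must be an upper bound for $\sup_{x\in X_j}|\det D^2u_\theta(x)-f(x)|$. This is exactly how Proposition~\ref{prop:box} defines it, with the enclosure $F(X_j)\supseteq f(X_j)$ and the outward-rounded evaluation of $\det\widetilde H$. Second, $\det D^2u_\theta$ is defined pointwise everywhere, since $u_\theta$ is smooth (Section~\ref{sec:divhession}). The maximum $R=\max_j R_j$ is finite if the cover is finite. For an infinite cover I would instead read $\max$ as $\sup$; the inequality then still holds, possibly with value $+\infty$.

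For the boundary bound, take any $y\in\partial B_1$ and write it as $y=(\cos t,\sin t)$ with $t\in[0,2\pi]$. This parametrization is surjective onto the circle. Because $\{I_k\}$ covers $[0,2\pi]$, some $k$ satisfies $t\in I_k$. The hypothesis then gives $|u_\theta(y)-g(y)|\le B_k\le B_\partial$, and taking the supremum over $y$ finishes this part.

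I do not expect a genuine obstacle. The one step that needs care is making sure Proposition~\ref{prop:box} really delivers a uniform bound over the whole box, and not merely at the candidate's base point. This is guaranteed because the residuals, and hence $E_{\det}(X)$, are certified as upper bounds for every $x\in X$, with the coefficients recomputed at $W\hat z+Ax+b$.
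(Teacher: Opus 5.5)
Your proposal is correct and follows the paper's own proof. Every point of the closed disk lies in some certified box $X_j$, so Proposition~\ref{prop:box} gives the bound $R_j\le R$ there, and the surjective angular parametrization carries the arc bounds $B_k\le B_\partial$ to the whole circle; your extra remarks (pointwise versus essential supremum, reading $\max$ as $\sup$ for infinite covers) only make the same argument more explicit.
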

\noindent The proof is given in Appendix~\ref{app:regionalproofs}.

The domain partition subdivides $[-1,1]^2$ into exact dyadic boxes and discards one only if an exact squared-distance test puts the entire closed box strictly outside the disk. Circle-touching boxes are retained. Boundary arcs use interval $\pi$, sine and cosine. Exact partition audits check coverage. At a work limit, coarse finite bounds are retained, not discarded. Coverage and tolerance attainment are therefore checked independently. A complete cover can yield a valid but uninformative upper bound.

\subsection{From a uniform residual to a solution bound}\label{sec:stability}
The stability estimate of \citet[Proposition 6.1]{nystrom2023}, under its convexity, continuity and data hypotheses, gives
\[
\norm{u_\theta-u^*}_{L^\infty(\Omega)}
\le B_\partial+C_n\operatorname{diam}(\Omega)|\Omega|^{1/n}R^{1/n}.
\]
Here the network is viewed as the exact convex solution with its own right-hand side $\det D^2u_\theta$ and its own boundary trace. Smoothness and nonnegativity of the Hessian ensure that its classical determinant is its Aleksandrov density. The constant $C_n$ depends only on the dimension but is not numerically explicit; on the unit disk a direct barrier gives an explicit coefficient. We give the classical smooth statement, sufficient for P1 and P2; extension to nonsmooth Aleksandrov targets requires the corresponding measure comparison and sum inequality.
\begin{proposition}[Explicit unit-disk barrier]\label{prop:barrier}
Let $u,v\in C^2(B_1)\cap C(\overline B_1)$ be convex and suppose that
\[
\norm{\det D^2u-\det D^2v}_{L^\infty(B_1)}\le R,
\qquad \norm{u-v}_{L^\infty(\partial B_1)}\le B_\partial.
\]
Then
\[
\boxed{\norm{u-v}_{L^\infty(B_1)}\le B_\partial+\tfrac12\sqrt R.}
\]
\end{proposition}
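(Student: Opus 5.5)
The plan is a comparison-principle argument with quadratic barriers, in the classical smooth setting. Set $\kappa=\tfrac12\sqrt R$ and define
\[
w_\pm(x)=v(x)\pm B_\partial\pm\tfrac{\kappa}{1}\,(\ldots)
\]
where the correction must be chosen so that the perturbed function dominates or is dominated by $u$. The natural correction is a paraboloid vanishing on the circle, namely $\pm\lambda(\abs{x}^2-1)/2$ up to sign; written out, $p(x)=\tfrac12(\abs{x}^2-1)$, which satisfies $p\le0$ in $B_1$, $p=0$ on $\partial B_1$, $D^2p=I$ and $\min p=-\tfrac12$. I will show $u\le v+B_\partial-\lambda p$ and $u\ge v-B_\partial+\lambda p$ for $\lambda=\sqrt R$. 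Since $-\lambda p\le\lambda/2$, this gives $\abs{u-v}\le B_\partial+\tfrac12\sqrt R$, which is the boxed bound.

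For the lower bound, set $\tilde v=v+\lambda p-B_\partial$. It is convex and $C^2$, and on $\partial B_1$ it satisfies $\tilde v=v-B_\partial\le u$. Its Hessian is $D^2v+\lambda I$. By Minkowski's determinant inequality in two dimensions, or directly from $\det(M+\lambda I)=\det M+\lambda\operatorname{tr}M+\lambda^2$ for $M\succeq0$, we get
\[
\det D^2\tilde v\ge\det D^2v+\lambda^2=\det D^2v+R\ge\det D^2u .
\]
Thus $\tilde v$ is a subsolution relative to $u$ with smaller boundary values. The comparison principle for convex $C^2$ functions then gives $\tilde v\le u$. For this I would use the Aleksandrov comparison principle: if $\det D^2\tilde v\ge\det D^2u$ in $B_1$ and $\tilde v\le u$ on the boundary, then $\tilde v\le u$.

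To keep the argument self-contained in the smooth case, I perturb for strictness. Replace $\lambda$ by $\lambda+\epsilon$, so that $\det D^2\tilde v>\det D^2u$. Suppose $u-\tilde v$ has a negative interior minimum at $x_0$. Then $D^2u(x_0)\succeq D^2\tilde v(x_0)\succeq0$, hence $\det D^2u(x_0)\ge\det D^2\tilde v(x_0)$, which is a contradiction. Letting $\epsilon\to0$ finishes this half. The monotonicity of $\det$ on positive semidefinite matrices under the Loewner order is the only algebraic fact needed.

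For the upper bound, I exchange the roles of $u$ and $v$: apply the same argument with $u$ in place of $v$, that is, take $\tilde u=u+\lambda p-B_\partial$. This gives $u+\lambda p-B_\partial\le v$, so $u\le v+B_\partial+\lambda/2$. Both halves use only the symmetric hypothesis $\abs{\det D^2u-\det D^2v}\le R$.

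The main obstacle is the boundary and regularity issue. The functions are $C^2$ only in the open disk and continuous up to the boundary, so the minimum of $u-\tilde v$ on $\overline B_1$ is attained. If it is negative, it cannot occur on $\partial B_1$, so it lies at an interior point where second derivatives exist. This resolves the issue without needing Aleksandrov measures.
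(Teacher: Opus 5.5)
Your proof is correct and is essentially the paper's argument: you use the same barrier $\tfrac12\sqrt R(\norm{x}_2^2-1)$, the same expansion $\det(H+\sqrt R I)=\det H+\sqrt R\operatorname{tr}H+R\ge\det H+R$ for $H\succeq0$, and the same interchange of $u$ and $v$. The only difference is that you prove the comparison step yourself, via the strict perturbation $\lambda\to\lambda+\epsilon$, an interior minimum of $u-\tilde v$, and monotonicity of the determinant in the Loewner order, whereas the paper cites the convex Monge--Amp\`ere comparison principle; your version is self-contained for the stated class $C^2(B_1)\cap C(\overline B_1)$, while the cited principle is what one would need for Aleksandrov solutions.
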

\noindent The proof is given in Appendix~\ref{app:regionalproofs}.

For smooth targets on the unit disk, this standard comparison argument provides an explicit coefficient while retaining square-root dependence on the residual. The numerical refinement evaluates this barrier bound using interval arithmetic and outward rounding, using the previously certified $R$ and $B_\partial$. For P1/P2 we also bound $|u_\theta-u^*|$ directly on boxes using the known manufactured solution; that separate bound does not require a stability constant.

\section{Evaluation procedure and stopping criteria}\label{sec:procedure}
An implementation can factor a single $m\times m$ matrix $\hat B$ at a given input and reuse it for the two first-derivative and three distinct second-derivative right-hand sides. Forming an explicit inverse is unnecessary. For dense matrices, factorization has the usual cubic cost in $m$.

\begin{enumerate}
\item Obtain an approximate state $\hat z$ and calculate \eqref{eq:hatdefs} and $\rho_z$.
\item Obtain $\hat J_i$ and symmetric $\hat K_{ij}$, either by solving the approximate-state systems or by differentiating a finite unroll.
\item Evaluate \eqref{eq:rhofirst}--\eqref{eq:rhosecond} using the same $\hat z$ and form \eqref{eq:ezalpha}--\eqref{eq:eij}.
\item Form $\hat H$, $E_H$ and $E_{\det}$, then report the interval \eqref{eq:residualinterval} rather than only $\hat r$.
\item If the required evaluation tolerance is not met, refine the state and/or derivative computations and recompute the bounds.
\end{enumerate}

An absolute stopping rule is $E_{\det}\le\tau_{\mathrm{abs}}$. If $\hat r>0$, a relative-to-observed-residual criterion is $E_{\det}\le\eta\hat r$ for $0<\eta<1$, which guarantees
\[
(1-\eta)\hat r\le r\le(1+\eta)\hat r.
\]
The corresponding ratio relative to the true residual is bounded by $\eta/(1-\eta)$, not by $\eta$. Near $\hat r=0$, an absolute tolerance remains necessary. No monotonicity of the computed bounds with iteration count is assumed.

For a finite input set $X$, Corollary~\ref{cor:det} gives
\[
\max_{x\in X}r(x)\le\max_{x\in X}\bigl(\hat r(x)+E_{\det}(x)\bigr).
\]
Replacing $X$ by a continuous domain requires additional control between sample points. Section~\ref{sec:regional} supplies that control through regional certificates and a complete cover. Appendix~\ref{sec:local} develops an alternative analytical construction. A sampled analogue for explicit ICNNs, combining Lipschitz constants with covering radii, is given in \citet[Lemma 6.2]{nystrom2023}.

\section{Numerical experiments}\label{sec:numerics}
We investigate three questions: how the a posteriori bounds compare with observed evaluation errors, whether adaptive stopping reduces total evaluation time, and how pointwise guarantees extend to the full domain. A synthetic consistency experiment tests the derivative estimates. Two manufactured Dirichlet problems then assess accuracy and certification on trained networks. Float64 diagnostics and interval guarantees are reported separately.

\subsection{Consistency of the derivative bounds}
We generate 120 random networks in double precision using seed 20261015, with 40 networks at each $q\in\{0.3,0.7,0.95\}$, all with $m=8$ and $n=2$. The entries of a positive matrix are uniform on $[0.05,1]$ and its rows are normalized to sum to $q$. Entries of $A$ and $b$ are independent zero-mean Gaussians with standard deviation $0.5$, readout weights are uniform on $[0.1,1]$, and the input is uniform on $[-1,1]^2$.

A reference state uses fixed-point iteration until its computed infinity-norm residual is at most $10^{-13}$. Reference derivatives solve \eqref{eq:firstsystem}--\eqref{eq:secondsystem}. An approximate state uses $4,8,12$ or $16$ iterations, cycling through these depths. Its derivative solves are perturbed by Gaussian noise with a shared scale $10^U$, where $U$ is uniform on $[-7,-3]$ for each case; second-derivative noise is symmetrized. This probes both state error and nonzero linear-system residuals.

\begin{table}[tb]
\centering\begin{tabular}{lrr}
\toprule Quantity & Maximum ratio & Violations \\
\midrule
State error & 0.98924 & 0 \\
First derivative error & 0.77929 & 0 \\
Second derivative error & 0.69098 & 0 \\
Hessian error & 0.17737 & 0 \\
Determinant error & 0.07730 & 0 \\
Curvature bound & 0.20601 & 0 \\
Cancellation $S\gamma\le\mathbf 1$ & 0.65309 & 0 \\
\bottomrule
\end{tabular}

\caption{Independent float64 consistency experiment, 120 networks. Error rows show the largest observed error-to-bound ratio, maximizing over coordinates where applicable. The last row is $\max_r(S\gamma)_r$, whose bound is one. A violation means a ratio exceeding $1+10^{-8}$. These are numerical checks against approximate reference quantities.}
\label{tab:checks}
\end{table}

Table~\ref{tab:checks} reports no violations. The largest computed reference-state residual bound $\rho_z/(1-q)$ is $1.99\times10^{-12}$. The maximum spectral difference between the linear-system Hessian and its factorization, divided by $\max\{1,\norm{H}_2\}$, is $2.10\times10^{-15}$.

\subsection{Problems, training and evaluation protocol}\label{sec:full}
For both problems, $\Omega=B_1(0)$ and $g=u^*|_{\partial\Omega}$. The manufactured solutions are
\begin{align}
\mathrm{P1}:\quad u^*(x)&=x_1^2+\tfrac14x_2^2,& f(x)&=1,\label{eq:p1}\\
\mathrm{P2}:\quad u^*(x)&=\tfrac12\norm{x}_2^2+0.1(x_1^4+x_2^4),&
f(x)&=(1+1.2x_1^2)(1+1.2x_2^2).\label{eq:p2}
\end{align}
Their Hessians are $\diag(2,1/2)$ and $\diag(1+1.2x_1^2,1+1.2x_2^2)$, respectively. Both targets are smooth and strictly convex, so Proposition~\ref{prop:barrier} applies.

We train one model per problem and seed $0,1,2$, with $m=12$, nominal $q=0.7$, and $L=32$ iterations initialized at zero. The parametrization
\[
W_{rs}=q\,\sigma(\xi_r)\frac{e^{M_{rs}}}{\sum_t e^{M_{rt}}},
\qquad w_r=\frac{\phi(\nu_r)}m
\]
uses unconstrained $\xi,M,\nu$, with $\sigma$ the logistic function, and ensures nonnegative weights with row sums below $q$ in exact arithmetic. The effective saved parameters are rechecked during interval certification. Training minimizes
\begin{equation}\label{eq:loss}
\mathcal L(\theta)=\frac1{N_\Omega}\sum_{x\in X_\Omega}
\bigl(\det D^2u_{\theta,L}(x)-f(x)\bigr)^2
+\frac{10}{N_\partial}\sum_{x\in X_\partial}
\bigl(u_{\theta,L}(x)-g(x)\bigr)^2.
\end{equation}
Interior values of $u^*$ are used only for evaluation. Training differentiates the finite unroll; the study does not benchmark an equilibrium implicit backward pass.

\begin{table}[tb]
\centering\small
\begin{tabularx}{\linewidth}{@{}lX@{}}\toprule
Item & Executed configuration\\\midrule
Adam & 1,200 steps; initial learning rate 0.02; factor 0.35 at steps 780 and 1,080; fresh 128 interior and 64 boundary points per step\\
L-BFGS & Up to 100 iterations, strong-Wolfe line search; fixed 512 interior and 128 boundary points\\
Evaluation & 2,048 interior points and 512 boundary points per model\\
Depth sweep & $L\in\{4,8,12,16,24,32\}$ on the same 2,048 interior points\\
Float64 timing & 512 points/model; five repetitions; tolerances $10^{-4}$ and $10^{-6}$\\
Interval timing & Separate eight-point/model cohort; five repetitions; same tolerances\\
Arithmetic & CPU, float64, one computational thread; Arb at 128-bit precision\\\bottomrule
\end{tabularx}
\caption{Protocol of the completed six-model run.}
\label{tab:training}
\end{table}

Interior evaluation points are sampled uniformly in the disk using the reserved seed $900000+\mathrm{seed}$. Boundary angles are $2\pi(k+0.371)/512$, $k=0,\ldots,511$. Training and evaluation samples are distinct. Reference evaluations use converged float64 states and linear solves; the numerical stopping test for the state correction divided by $1-q$ is $10^{-13}$. These reference values are diagnostics, not exact real values. The maximum reference determinant diagnostic across the models is $1.65\times10^{-11}$ (rounded up).

\subsection{Accuracy of the six trained models}
Table~\ref{tab:pilots} reports sample metrics for the saved equilibrium models. The relative error is $\norm{u-u^*}_{\ell^2(X)}/\norm{u^*}_{\ell^2(X)}$. Its mean and sample standard deviation across the three seeds are $0.2847\%\pm0.0333\%$ for P1 and $0.2941\%\pm0.0178\%$ for P2. These are descriptive summaries over three runs, not uniform error guarantees.

\begin{table}[tb]
\centering\small\setlength{\tabcolsep}{5pt}
\begin{tabular}{@{}lrrrrr@{}}\toprule
Model & Rel. $\ell^2$ (\%) & Max. $|u-u^*|$ & Max. $|\det H-f|$ & Max. boundary & $\norm W_\infty$ \\ \midrule
P1/0 & 0.2478 & 0.003927 & 0.045998 & 0.004150 & 0.6584 \\
P1/1 & 0.2937 & 0.004282 & 0.054399 & 0.004573 & 0.6135 \\
P1/2 & 0.3125 & 0.004958 & 0.052480 & 0.005022 & 0.5817 \\
P2/0 & 0.2808 & 0.003626 & 0.102253 & 0.003961 & 0.6466 \\
P2/1 & 0.2872 & 0.003799 & 0.080306 & 0.003872 & 0.6580 \\
P2/2 & 0.3144 & 0.005618 & 0.089144 & 0.005954 & 0.6330 \\
\bottomrule\end{tabular}

\caption{Float64 sample metrics. A model label P1/0 means problem P1, seed 0. Interior maxima use 2,048 points; boundary maxima use 512 points. Values are rounded for display and are not interval bounds.}
\label{tab:pilots}
\end{table}

The smallest sampled Hessian eigenvalues range from $0.4566$ to $0.4756$ for P1 and from $0.9801$ to $1.0003$ for P2. Global convexity follows from the representation theorem, not from these samples. The learned row norms range from $0.5817$ to $0.6584$, all below $3/4$. Consequently, the cubic branch in \eqref{eq:curvature} is smaller for these trained models: the experiments test the evaluation certificate, while the near-critical exponent sharpness is demonstrated by the analytical family in Theorem~\ref{thm:sharpness}.

\subsection{Pointwise certificates and iteration depth}\label{sec:evidence}
There are $6\times6\times2048=73{,}728$ point/depth pairs. Every pair has both a float64 diagnostic and an Arb certificate. The float64 diagnostic declares a violation only if the observed determinant error exceeds its estimator plus the reference diagnostic and a $10^{-11}$ numerical floor. No such violations are observed. The interval audit contains 73,728 finite certificate records at 128-bit precision, with no missing point/depth pairs.

\begin{figure}[!htbp]
\centering\includegraphics[width=\linewidth]{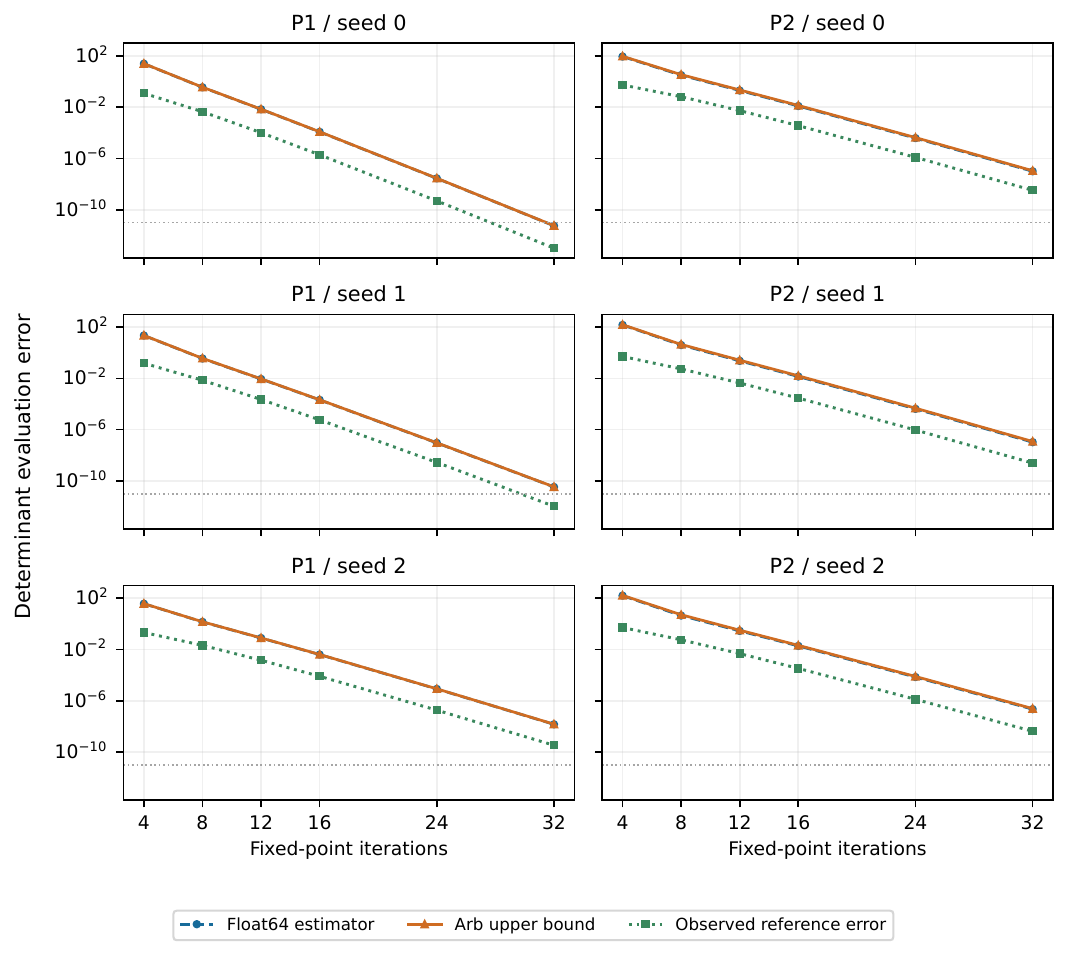}
\caption{Determinant evaluation error over iteration depth for all six models. Each curve is the maximum over the same 2,048 evaluation points. Blue: float64 estimator; orange: outward-rounded Arb upper bound; green: error against the float64 reference. The gray line marks $10^{-11}$, the additive diagnostic floor; the reference diagnostic is added separately in violation counting. Curves near this scale should not be interpreted as exact-error measurements.}
\label{fig:depth}
\end{figure}

Figure~\ref{fig:depth} shows decreasing evaluation errors with depth. At depth 12, the median float64 bound/error ratio over points above the comparison floor ranges from $26.15$ to $58.03$ across the six models. Ratios at the floating-point floor are excluded from this statistic.

A fixed iteration threshold does not certify every point relative to its PDE residual. At depth 12, $62.99$--$89.11\%$ of P1 points satisfy $E_{\det}\le0.05\hat r$, depending on the seed. At depth 16, the corresponding P2 fractions are $86.52$--$92.38\%$. All six sampled cohorts satisfy this float64 criterion at depth 32. Absolute tolerances remain necessary near vanishing residuals, as explained in Section~\ref{sec:procedure}.

\FloatBarrier
\subsection{Cost of adaptive evaluation}
The float64 benchmark compares fixed 32 iterations, fixed 32 plus the diagnostic certificate, and adaptive stopping on checkpoints $4,8,12,16,24,32,48,64$. The timer includes the certificate and management of the remaining active points. Methods are warmed up, and their timing order is permuted deterministically across five repetitions. We use the ratio of median times, $t_{32}/t_{\mathrm{adaptive}}$, as the speedup measure. The baseline for this cohort is fixed-32 evaluation without a certificate.

At tolerance $10^{-4}$, adaptive float64 evaluation uses mean depths $11.66$--$16.05$ and is $1.45$--$1.99$ times faster across models. At $10^{-6}$ it uses mean depths $14.95$--$20.84$ and is $1.13$--$1.48$ times faster. All 512 points per model are accepted at both tolerances, and observed errors against the reference lie below the requested tolerances. This stopping rule uses the float64 estimator without roundoff enclosure.

\begin{figure}[!htbp]
\centering\includegraphics[width=\linewidth]{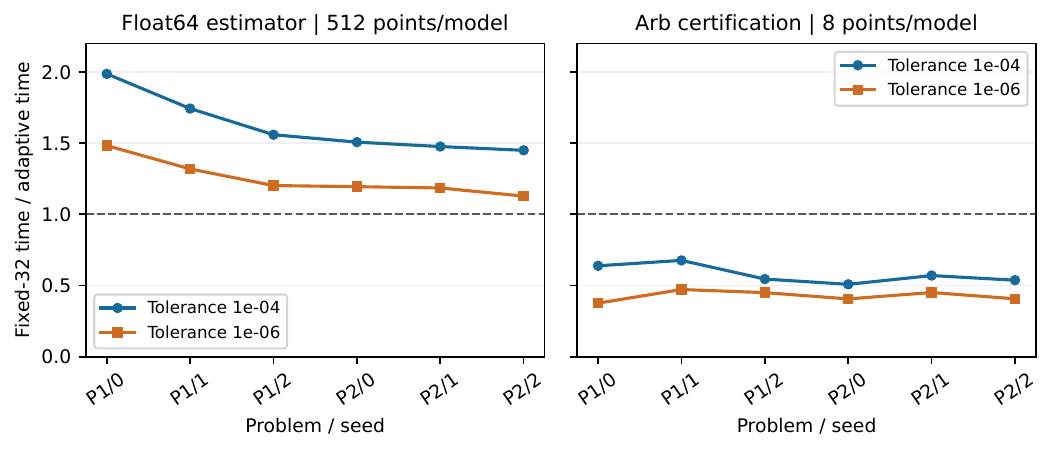}
\caption{Matched-cohort evaluation timings. Ratios above one favor adaptive evaluation; ratios below one favor fixed 32 iterations. Left: float64 estimator on 512 points/model, compared with fixed 32 without certification. Right: full Arb stopping on eight points/model, compared with fixed 32 plus Arb certification. Both comparisons include the adaptive certificate cost and use five repetitions. The cohorts and baselines differ; their throughputs must not be compared directly.}
\label{fig:timing}
\end{figure}

The separate Arb benchmark includes all interval work in both fixed and adaptive timings. All eight points per model are certified within each requested tolerance. Nevertheless, speedup ratios are $0.509$--$0.677$ at $10^{-4}$ and $0.376$--$0.472$ at $10^{-6}$: adaptive Arb evaluation is approximately $1.48$--$2.66$ times slower. Repeated certification outweighs the iteration savings in this implementation and cohort. The interval timing loops use fixed-then-adaptive order after warmup, unlike the interleaved float64 benchmark; timing-order effects and the small cohort limit broader performance conclusions. 

\FloatBarrier
\subsection{Complete-domain and boundary certification}\label{sec:domainresults}
The initial domain protocol uses exact dyadic boxes, 128-bit Arb calculations, depth at most 8, a 20,000-box work budget, and residual target $R\le0.25$. Boundary refinement uses trigonometric interval arcs, depth at most 12, an 8,192-arc budget and target $B_\partial\le0.01$. All six initial covers are complete, all satisfy the boundary target, and five satisfy the residual target. Initial domain-stage times range from 62.7 to 202.1 seconds in the original runtime.

For P2, seed 2, the initial residual bound is $0.4376936062$, with 14,584 retained boxes and 502 boxes above the requested tolerance. We therefore perform a separately recorded, post hoc refinement of that fixed model. Only boxes above $0.25$ are subdivided, prioritizing the largest bounds. The first stage permits depth 8; the second permits depth 9 if needed. Each stage is limited to 8,000 child-box evaluations and 120 seconds, with a cooperative time check before each complete four-child split. A parent is replaced only after all its intersecting children have certificates. Its valid bounds also apply to each child and are retained when tighter. Boundary certificates and model parameters are unchanged.

\begin{figure}[!htbp]
\centering\includegraphics[width=.95\linewidth]{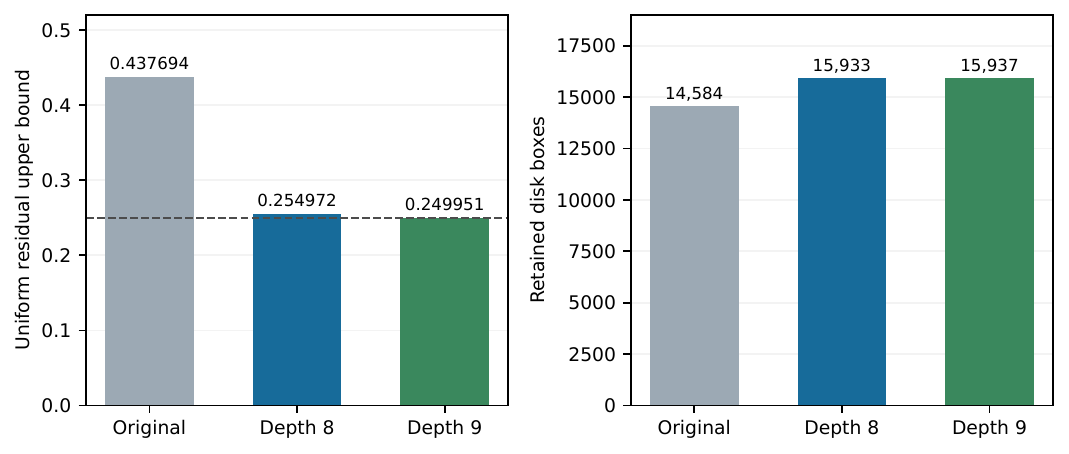}
\caption{Additional P2, seed-2 certification, without retraining. The residual upper bound decreases from $0.437694$ to $0.254972$ at depth 8 and then to $0.249951$ at depth 9 (displayed bounds rounded up). The cover remains complete throughout. Stage 1 computes 1,850 new certificates and stage 2 computes 10, retaining 15,937 boxes at completion.}
\label{fig:refinement}
\end{figure}

The additional refinement uses a separate CPU environment with one computational thread. Its measured incremental computation time is 3.32 seconds, excluding input/output and coverage audits. This reuses the existing domain and boundary certificates, runs in a different runtime, and is not a matched speed comparison with the original full-domain calculation. The exported partition is checked using exact arithmetic.

\begin{table}[tb]
\centering\small\setlength{\tabcolsep}{4pt}
\begin{tabular}{@{}lrrrrr@{}}\toprule
Model & $R$ & $B_\partial$ & Direct $U$ & Barrier $U$ & Boxes \\ \midrule
P1/0 & 0.232852 & 0.010000 & 0.107237 & 0.251273 & 3,332 \\
P1/1 & 0.243891 & 0.009989 & 0.114571 & 0.256915 & 3,332 \\
P1/2 & 0.217254 & 0.009996 & 0.110518 & 0.243048 & 3,332 \\
P2/0 & 0.249993 & 0.009997 & 0.053797 & 0.259994 & 11,310 \\
P2/1 & 0.249997 & 0.009999 & 0.065538 & 0.259997 & 11,668 \\
P2/2 & 0.249951 & 0.009989 & 0.046754 & 0.259965 & 15,937 \\
\bottomrule\end{tabular}

\caption{Uniform upper bounds on the closed disk and its boundary. P2/2 uses the additional refinement; all other rows use the original covers. Direct $U$ bounds $|u_\theta-u^*|$ using the known manufactured solution. Barrier $U=B_\partial+\sqrt R/2$ uses Proposition~\ref{prop:barrier}. Every displayed bound is rounded upward to six decimal places.}
\label{tab:domain}
\end{table}

After refinement, every model satisfies $R<0.25$ and $B_\partial<0.01$ (Table~\ref{tab:domain}). Uniform residual bounds are approximately $2.44$--$5.06$ times the corresponding sampled residual maxima.

\begin{figure}[!htbp]
\centering\includegraphics[width=\linewidth]{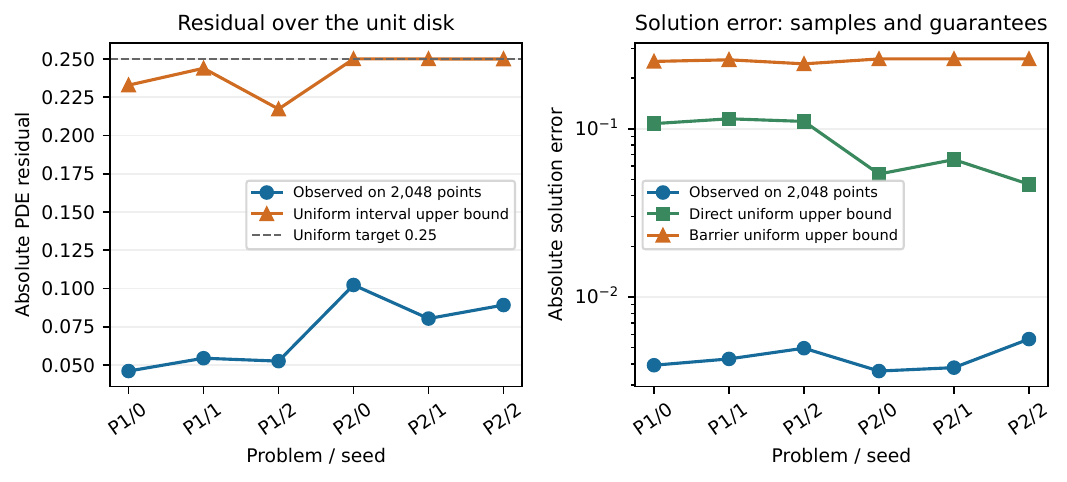}
\caption{Samples and uniform bounds have different meanings. Left: observed PDE residual maxima versus interval upper bounds over the disk. Right: observed solution-error maxima, direct uniform bounds using the manufactured target, and the explicit barrier bound. P2/2 includes the final refinement. Curves connect model labels for readability; no trend across seeds is assumed.}
\label{fig:uniform}
\end{figure}

The barrier expression is evaluated in Arb from the certified binary64 upper bounds and exported upward. For the six final covers it gives solution-error bounds of approximately $0.243$--$0.260$. The direct manufactured-solution bounds, approximately $0.047$--$0.115$, are tighter in every case. In particular, P2/2 has barrier bound $0.2599640704$ and direct bound $0.0467530969$, both rounded up. The direct bound is obtained independently and is unchanged by this refinement.

\FloatBarrier
\FloatBarrier
\section{Discussion and limitations}\label{sec:limitations}
\paragraph{Evaluation and training.}
The results certify evaluation of frozen networks. They do not prove convergence of sampled strong-loss training or control approximation, optimization and loss-discretization errors along a training sequence. Two manufactured problems and three seeds each provide limited empirical coverage, and no comparison with explicit ICNNs or other PDE solvers is conducted here.

\paragraph{Scope and tightness.}
The inverse-gap exponents are sharp over the network class, but optimal prefactors remain open. The weighted result and analytical local envelope are theoretical extensions; the implemented regional path uses unweighted interval residuals and verified solves. The disk barrier is stated for smooth convex targets. Its square-root dependence can give loose solution estimates, and the direct interval bounds also substantially exceed the observed sample errors.

\paragraph{Performance and numerical guarantees.}
Adaptive float64 evaluation reduces measured runtime, while full Arb stopping is slower in the tested implementation. Interval batching, reuse and stopping-policy design are possible future improvements, not demonstrated gains. Outward arithmetic, exact coverage checks and consistency tests support the implementation, but do not constitute formal verification of the complete software stack.

\section{Conclusion}
For nonnegative contractive softplus equilibrium networks, positivity yields $S\gamma\le\one$ and a uniform Hessian bound with optimal inverse-gap exponent two. A residual-based perturbation argument controls derivatives computed at approximate states. Third derivatives admit sharp exponent three, and positive weighted norms extend the representation condition to $\rho(W)<1$. Regional interval residuals and complete boundary covers provide uniform bounds for frozen models, with an explicit residual-to-solution estimate on the disk. The six-model study supplies 73,728 point/depth interval certificates and, after one bounded fixed-model refinement, complete covers meeting the prescribed residual and boundary targets for every model. Adaptive evaluation saves time with the float64 estimator, whereas rigorous adaptive stopping remains more expensive than fixed-depth certification in the measured cohorts. These findings quantify both the usefulness and the cost of certified evaluation.

\section*{Code availability}\label{sec:availability}
The code used to train and evaluate the models is publicly available at \url{https://github.com/EdisonVazquezG/monge-ampere-equilibrium}.

\section*{Use of AI tools}
The authors used ChatGPT (OpenAI) and Claude (Anthropic) to assist with language editing, consistency checks of notation and statements, verification of bibliographic references against their sources. All mathematical results, proofs, experiments and conclusions are the authors' own. The authors reviewed and edited all AI-assisted content and take full responsibility for the manuscript.

\clearpage
\begingroup
\small
\setlength{\bibsep}{3pt}
\bibliographystyle{icml2017}
\bibliography{references}
\endgroup

\clearpage
\appendix
\numberwithin{equation}{section}
\section{Equilibrium foundations and derivative identities}\label{app:foundations}
\subsection{Softplus and the equilibrium}
\begin{lemma}[Derivatives of the Softplus function $\phi$]\label{lem:softplus}
\[
0<\phi'(t)<1,\qquad \phi''(t)=\phi'(t)(1-\phi'(t))\in(0,1/4],\qquad
\phi'''(t)=\phi'(t)(1-\phi'(t))(1-2\phi'(t)).
\]
In particular $\phi$ is increasing, convex and $1$-Lipschitz; both $\phi'$ and $\phi''$ are $1/4$-Lipschitz.
\end{lemma}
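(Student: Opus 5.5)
The plan is to compute everything from the closed form $\phi(t)=\log(1+e^t)$. Differentiating once gives $\phi'(t)=e^t/(1+e^t)=\sigma(t)$, the logistic function. Since $e^t>0$, we have $0<\sigma(t)<1$ for every real $t$. This immediately gives that $\phi$ is strictly increasing. It also gives that $\phi$ is $1$-Lipschitz, by the mean value theorem with $|\phi'|<1$.

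Next I will differentiate $\sigma$ and use the identity $1-\sigma=1/(1+e^t)$. This yields $\sigma'=e^t/(1+e^t)^2=\sigma(1-\sigma)$, so $\phi''=\phi'(1-\phi')$. Positivity follows from $0<\phi'<1$. The upper bound $1/4$ follows from $p(1-p)\le 1/4$ on $(0,1)$, since $1/4-p(1-p)=(p-\tfrac12)^2\ge0$. Equality holds at $t=0$, where $\phi'=1/2$, so the value $1/4$ is attained and the range is $(0,1/4]$. Convexity of $\phi$ then follows from $\phi''>0$.

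For the third derivative, I will apply the chain rule to $\phi''=\phi'-(\phi')^2$. This gives $\phi'''=\phi''-2\phi'\phi''=\phi''(1-2\phi')$, and substituting $\phi''=\phi'(1-\phi')$ gives the stated product formula.

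The Lipschitz claims then follow from the mean value theorem once the next derivative is shown to be bounded by $1/4$ in absolute value. For $\phi'$, the bound is $|\phi''|\le1/4$, which was shown above. For $\phi''$, I need $|\phi'''|\le 1/4$, and this is the only step that is not a direct read-off.

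To handle it, write $p=\phi'\in(0,1)$, so that $|\phi'''|=p(1-p)|1-2p|$. Substitute $p=\tfrac12+y$ with $|y|<\tfrac12$. Then $p(1-p)=\tfrac14-y^2$ and $|1-2p|=2|y|$, so $|\phi'''|=2|y|(\tfrac14-y^2)$. Maximizing $g(y)=y/2-2y^3$ on $[0,\tfrac12]$ gives the critical point $y^2=1/12$. The maximum value is $2\cdot\frac{1}{2\sqrt3}\cdot\frac16=\frac{1}{6\sqrt3}\approx0.096$, which is less than $1/4$.

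A cruder bound also suffices: $p(1-p)\le1/4$ and $|1-2p|<1$ together give $|\phi'''|<1/4$ directly. So $\phi''$ is in fact $\frac{1}{6\sqrt3}$-Lipschitz, and in particular $1/4$-Lipschitz. No real obstacle is expected beyond this elementary maximization.
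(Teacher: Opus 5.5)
Your proposal is correct and follows the paper's own route: compute $\phi'=e^t/(1+e^t)$ and the displayed identities, bound them with $a(1-a)\le 1/4$ and $|1-2a|\le 1$ on $(0,1)$, and get the Lipschitz claims from the mean value theorem. Your sharper constant $1/(6\sqrt{3})$ for $|\phi'''|$ is a correct but optional refinement; the paper only needs the crude bound, which you also give.
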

\begin{proof}
Differentiate $\phi(t)=\log(1+e^t)$ to obtain $\phi'(t)=e^t/(1+e^t)$ and the displayed identities. The bounds follow from the fact that $a(1-a)\le 1/4$ and $|1-2a|\le 1$ for any $a\in (0,1)$. The mean value theorem gives the Lipschitz statements.
\end{proof}

\begin{proposition}[Existence, convexity and state error]\label{prop:equilibrium}
Under assumption~\eqref{eq:hypotheses}, for every $x\in\R^n$ the fixed-point equation in \eqref{eq:model} has a unique solution. Each component of $z^*(x)$ and the scalar function $u_\theta(x)$ is convex. For any approximate state $\hat z\in\R^m$, define
\[
\rho_z=\norm{\hat z-\phi(W\hat z+Ax+b)}_\infty.
\]
Then
\begin{equation}\label{eq:statebound}
\norm{z^*(x)-\hat z}_\infty\le\frac{\rho_z}{\delta}.
\end{equation}
Moreover, $\Lip(u_\theta)\le\norm{w}_1K_A/\delta+\norm{a}_2$, where the input space carries the Euclidean norm.
\end{proposition}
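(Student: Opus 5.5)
The plan is to treat the state map $T_x(z)=\phi(Wz+Ax+b)$ as a contraction on $(\R^m,\norm{\cdot}_\infty)$ and then derive the remaining claims in order: the equilibrium itself, the residual bound, convexity, and the Lipschitz estimate.

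\textbf{Contraction and uniqueness.} Since $\phi$ is $1$-Lipschitz componentwise (Lemma~\ref{lem:softplus}),
\[
\norm{T_x(z)-T_x(z')}_\infty\le\norm{W(z-z')}_\infty\le q\norm{z-z'}_\infty .
\]
Banach's fixed-point theorem then gives a unique fixed point $z^*(x)$, and the iterates $z_\ell$ from $z_0=0$ converge to it.

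\textbf{State-error bound.} I write $\hat z-z^*=(\hat z-T_x\hat z)+(T_x\hat z-T_xz^*)$. Taking norms gives
\[
\norm{\hat z-z^*}_\infty\le\rho_z+q\norm{\hat z-z^*}_\infty .
\]
Rearranging yields \eqref{eq:statebound}.

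\textbf{Convexity.} I argue by induction on the iterates that every component of $z_\ell(x)$ is convex in $x$.
\begin{itemize}
\item $z_0=0$ is trivially convex.
\item If each component of $z_\ell$ is convex, then $Wz_\ell+Ax+b$ is convex componentwise, because $W\ge0$ and nonnegative combinations of convex functions plus affine terms are convex.
\item Composing with the convex, nondecreasing $\phi$ preserves convexity, so $z_{\ell+1}$ is convex.
\end{itemize}
Pointwise limits of convex functions are convex, hence each $z^*_r$ is convex. Finally $u_\theta=w^\top z^*+a^\top x+c$ is convex because $w\ge0$.

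\textbf{Lipschitz bound.} I compare equilibria at $x$ and $y$, writing $z^*(x)=T_x(z^*(x))$ and $z^*(y)=T_y(z^*(y))$. Then
\[
\norm{z^*(x)-z^*(y)}_\infty\le q\norm{z^*(x)-z^*(y)}_\infty+\norm{A(x-y)}_\infty .
\]
Each row satisfies $|A_{r,:}(x-y)|\le K_A\norm{x-y}_2$ by Cauchy--Schwarz, so
\[
\norm{z^*(x)-z^*(y)}_\infty\le K_A\norm{x-y}_2/\delta .
\]
The estimate $|w^\top v|\le\norm{w}_1\norm{v}_\infty$, together with the linear term $a^\top(x-y)$, gives $\Lip(u_\theta)\le\norm{w}_1K_A/\delta+\norm{a}_2$. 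This argument avoids needing smoothness of $z^*$, which is established separately later.

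No step is really an obstacle. The only point needing care is the convexity induction: it needs both monotonicity of $\phi$ and $W\ge0$, and the limit passage is justified by the pointwise convergence from the contraction.
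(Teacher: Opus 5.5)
Your proposal is correct and follows essentially the same route as the paper's proof. Both use the $\infty$-norm contraction with Banach's theorem, convexity by induction on the iterates from $z_0=0$ passed to the pointwise limit, the triangle inequality for \eqref{eq:statebound}, and a comparison of the fixed-point equations at $x$ and $y$ for the Lipschitz bound.
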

\begin{proof}
For fixed $x$, let $T_x(z)=\phi(Wz+Ax+b)$. Lemma~\ref{lem:softplus} gives
\[
\norm{T_x(z)-T_x(\tilde z)}_\infty
\le\norm{W}_\infty\norm{z-\tilde z}_\infty
\le q\norm{z-\tilde z}_\infty.
\]
The space $\R^m$ is complete, so Banach's theorem gives a unique fixed point and convergence of the iterates. Starting from the constant function $z_0(x)=0$, each component of $Wz_\ell(x)+Ax+b$ is convex whenever those of $z_\ell$ are convex. Composition with the increasing convex $\phi$ preserves convexity. Pointwise convergence $z_\ell\to z^*$ then gives entrywise convexity of the limit. Nonnegative linear combination and addition of an affine function yield convexity of $u_\theta$.

The triangle inequality and contraction imply
\[
\norm{\hat z-z^*}_\infty\le\rho_z+q\norm{\hat z-z^*}_\infty,
\]
which proves \eqref{eq:statebound}. Finally, comparing the fixed-point equations at $x,y$ gives
\[
\norm{z^*(x)-z^*(y)}_\infty
\le q\norm{z^*(x)-z^*(y)}_\infty+K_A\norm{x-y}_2.
\]
Divide by $\delta$ and apply the readout.
\end{proof}

\subsection{Implicit derivatives and Hessian structure}
\begin{proposition}[Smoothness and derivative systems]\label{prop:derivatives}
The function $z^*$ is $C^\infty$ on $\R^n$. For $1\le i,j\le n$,
\begin{align}
    BJ_i&=\Psi A_{:,i},\label{eq:firstsystem}\\
BK_{ij}&=\gamma\odot v_i\odot v_j,
\qquad v_i=A_{:,i}+WJ_i=V_{:,i}.\label{eq:secondsystem}
\end{align}
Furthermore $S\ge0$, $S\ge I$, $\norm{S}_\infty\le\delta^{-1}$, and $J=\Psi V$.
\end{proposition}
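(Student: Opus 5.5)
The plan is to prove the three groups of claims in turn: smoothness, the derivative systems, and the properties of the resolvent. All three rest on the implicit function theorem applied to $F(x,z)=z-\phi(Wz+Ax+b)$.

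\textbf{Smoothness and invertibility.} The map $F$ is $C^\infty$ because $\phi$ is, and $D_zF=I-\Psi(z)W$, where $\Psi(z)=\diag(\phi'(Wz+Ax+b))$. Since $0<\phi'<1$ entrywise and $W\ge0$, we have $\norm{\Psi W}_\infty\le\norm{W}_\infty\le q<1$. So $B=I-\Psi W$ is invertible, with the Neumann series
\[
S=\sum_{k\ge0}(\Psi W)^k .
\]
By Proposition~\ref{prop:equilibrium} the fixed point $z^*(x)$ exists and is unique. The implicit function theorem then gives a $C^\infty$ local solution through each point, and uniqueness identifies it with $z^*$. Hence $z^*\in C^\infty(\R^n)$.

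\textbf{Derivative systems.} Differentiate $z^*=\phi(s)$, with $s=Wz^*+Ax+b$, in $x_i$. This gives $J_i=\Psi(WJ_i+A_{:,i})$, which is \eqref{eq:firstsystem}. Next differentiate $J_i=\phi'(s)\odot\partial_i s$ in $x_j$, using $\partial_j s=A_{:,j}+WJ_j=v_j$ and $\partial_i s=v_i$:
\[
K_{ij}=\phi''(s)\odot v_i\odot v_j+\Psi WK_{ij}.
\]
Lemma~\ref{lem:softplus} gives $\phi''(s)=\gamma$, so this is \eqref{eq:secondsystem}.

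\textbf{The identities for $V$ and $J$.} From $J_i=\Psi v_i$ we get $v_i=A_{:,i}+W\Psi v_i$, that is, $(I-W\Psi)v_i=A_{:,i}$. The matrix $I-W\Psi$ is invertible by the same Neumann argument, since $\norm{W\Psi}_\infty\le q$. Therefore $v_i=V_{:,i}$, and $J=\Psi V$ follows.

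\textbf{Resolvent properties.} Every term $(\Psi W)^k$ in the Neumann series is entrywise nonnegative, so $S\ge0$. Keeping only the $k=0$ term gives $S\ge I$. Submultiplicativity gives
\[
\norm{S}_\infty\le\sum_k q^k=\delta^{-1}.
\]

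\textbf{Main obstacle.} The only delicate point is the global identification of the local implicit solution with $z^*$. This is handled by the uniqueness of the fixed point at every $x$. The identities themselves are routine chain-rule computations.
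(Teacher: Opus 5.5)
Your proposal is correct and follows essentially the same route as the paper: Neumann series for both resolvents, the implicit function theorem plus uniqueness of the fixed point for global smoothness, and two chain-rule differentiations of $z^*=\phi(s)$. The only cosmetic difference is that you get $v_i=V_{:,i}$ directly from $(I-W\Psi)v_i=A_{:,i}$, whereas the paper uses the push-through identity $(I-\Psi W)\Psi=\Psi(I-W\Psi)$ to obtain $S\Psi A=\Psi V$; the two arguments are equivalent.
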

\begin{proof}
Since $0<\Psi<I$ on the diagonal, $\norm{\Psi W}_\infty\le q$ and $\norm{W\Psi}_\infty\le q$. Both inverses in \eqref{eq:notations} therefore exist since the Neumann series converge. In particular
\[
S=\sum_{\ell=0}^\infty(\Psi W)^\ell\ge I,
\qquad \norm{S}_\infty\le\sum_{\ell=0}^\infty q^\ell=\delta^{-1}.
\]
For $F(z,x)=z-\phi(Wz+Ax+b)$ the derivative in $z$ equals $B$ at a fixed point. The implicit function theorem yields a smooth local solution at every input. Uniqueness makes these local solutions agree on overlaps, giving a global smooth $z^*$.

Differentiate $z^*=\phi(s)$ once to obtain $J_i=\Psi (WJ_i+A_{:,i})$, hence \eqref{eq:firstsystem}. A second differentiation gives
\[
K_{ij}=\Psi WK_{ij}+\gamma\odot(WJ_i+A_{:,i})\odot(WJ_j+A_{:,j}),
\]
which proves \eqref{eq:secondsystem}. Finally, the identity $(I-\Psi W)\Psi=\Psi(I-W\Psi)$ implies $S\Psi A=\Psi(I-W\Psi)^{-1}A=\Psi V$. Since $J=S\Psi A$, it also follows that $A+WJ=V$.
\end{proof}

\begin{proposition}[Hessian factorization]\label{prop:factor}
Let $\beta=(S^\top w)\odot\gamma$. Then
\begin{equation}\label{eq:factor}
H:=D^2u_\theta=V^\top\diag(\beta)V\succeq0.
\end{equation}
If all entries of $w$ are strictly positive, then $H\succ0$ if and only if $\rank A=n$. For $n=2$, writing $v_{r1},v_{r2}$ for the entries of row $r$ of $V$,
\begin{equation}\label{eq:cauchybinet}
\det H=\sum_{1\le r<t\le m}\beta_r\beta_t
(v_{r1}v_{t2}-v_{r2}v_{t1})^2.
\end{equation}
\end{proposition}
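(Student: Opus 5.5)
The plan is to differentiate $u_\theta=w^\top z^*+a^\top x+c$ twice and substitute the second-derivative system \eqref{eq:secondsystem} from Proposition~\ref{prop:derivatives}. Since $K_{ij}=S(\gamma\odot V_{:,i}\odot V_{:,j})$, we get
\[
H_{ij}=w^\top S(\gamma\odot V_{:,i}\odot V_{:,j})=\sum_{r=1}^m (S^\top w)_r\gamma_r V_{ri}V_{rj}=\bigl(V^\top\diag(\beta)V\bigr)_{ij}.
\]
This proves \eqref{eq:factor}. Positive semidefiniteness then follows because $S\ge0$ and $w\ge0$ give $S^\top w\ge0$, and $\gamma>0$ by Lemma~\ref{lem:softplus}, so $\beta\ge0$.

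For strict positivity, assume $w>0$. Since $S\ge I$ entrywise and $S\ge0$, we have $S^\top w\ge w>0$, and hence $\beta>0$ componentwise. Then
\[
h^\top Hh=\sum_r\beta_r (Vh)_r^2,
\]
which vanishes exactly when $Vh=0$. Because $V=(I-W\Psi)^{-1}A$ with an invertible prefactor, $Vh=0$ if and only if $Ah=0$. So $H\succ0$ if and only if $\ker A=\{0\}$, that is, $\rank A=n$; the converse direction is the same equivalence read backwards.

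For $n=2$, apply the Cauchy--Binet formula to $H=(D^{1/2}V)^\top(D^{1/2}V)$ with $D=\diag(\beta)$. Here $D^{1/2}$ is well defined since $\beta\ge0$, and Cauchy--Binet can be applied directly to $V^\top D V$ in any case. The determinant of this $2\times 2$ Gram matrix is the sum over pairs of rows $r<t$ of the squared $2\times2$ minors of $D^{1/2}V$. Each such minor is $\sqrt{\beta_r\beta_t}\,(v_{r1}v_{t2}-v_{r2}v_{t1})$, and squaring it gives \eqref{eq:cauchybinet}. One could also verify this directly by expanding $\det H=H_{11}H_{22}-H_{12}^2$ as a double sum and using Lagrange's identity.

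No step is a genuine obstacle. The only care needed is in the strict convexity criterion: we must use $S\ge I$, not merely $S\ge0$, to get $\beta_r>0$, and we must use the invertibility of $I-W\Psi$ to transfer the kernel of $V$ to the kernel of $A$.
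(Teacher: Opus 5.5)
Your proof is correct and follows essentially the paper's route: substitute the second-derivative system, use $S\ge I$ with $\gamma>0$ to get $\beta>0$, transfer the kernel through the invertible factor $(I-W\Psi)^{-1}$, and apply Cauchy--Binet to $\diag(\sqrt\beta)V$. The only difference is cosmetic: you read off $H_{ij}$ entrywise, whereas the paper computes the quadratic form $h^\top Hh=\sum_r\beta_r(Vh)_r^2$ and identifies the symmetric matrix from it.
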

\begin{proof}
For any $h\in\R^n$, bilinearity of \eqref{eq:secondsystem} gives
\[
D^2z^*[h,h]=S\bigl[\gamma\odot(Vh)^2\bigr].
\]
The affine readout has zero Hessian, so
\[
h^\top Hh=(S^\top w)^\top[\gamma\odot(Vh)^2]
=\sum_r\beta_r(Vh)_r^2.
\]
This proves \eqref{eq:factor}. All $\beta_r$ are nonnegative. If $w>0$, then $S\ge I$ and $\gamma>0$ give $\beta>0$. Consequently $H$ is positive definite exactly when $V$ has full column rank. The invertibility of $I-W\Psi$ gives $\rank V=\rank A$. Formula~\eqref{eq:cauchybinet} is Cauchy--Binet applied to $\diag(\sqrt\beta)V$.
\end{proof}

\begin{remark}
Strict positivity of every entry of $w$ is a sufficient hypothesis for the rank criterion, not a necessary condition for strict convexity of every network. The factorization itself remains valid when some readout weights vanish. The two-dimensional determinant formula is also an algebraic identity, not a roundoff guarantee for its numerical evaluation.
\end{remark}

\section{Proof of the third-derivative bound}\label{app:thirdproof}
This appendix proves Theorem~\ref{thm:third}, including optimality of the inverse-gap exponent.
\begin{proof}
Put $M=K_A/\delta$. For unit vectors $h_1,h_2,h_3$, write
$v_i=Vh_i$, $K_{ij}=D^2z^*[h_i,h_j]$, and
$t=D^3z^*[h_1,h_2,h_3]$. Positivity of $S$ and the cancellation
$S\gamma\le\one$ give
\[
|v_i|\le M\one,\qquad
|K_{ij}|=|S(\gamma\odot v_i\odot v_j)|\le M^2\one.
\]
Differentiating the second-derivative equation gives
\begin{align*}
Bt={}&\phi'''(s)\odot v_1\odot v_2\odot v_3\\
&+\gamma\odot\bigl((WK_{12})\odot v_3+(WK_{13})\odot v_2
+(WK_{23})\odot v_1\bigr).
\end{align*}
Since $|\phi'''(s)|\le\gamma$ and $|WK_{ij}|\le qM^2\one$, multiplication by $S\ge0$ yields
$|t|\le(1+3q)M^3S\gamma\le(1+3q)M^3\one$.
Apply $w^\top$ and take the supremum over all three directions. 

For sharpness use the one-neuron family of Theorem~\ref{thm:sharpness}, for which $q=\phi'(s)=1-\eps$ at $x=0$. Direct scalar differentiation gives
\[
u'''(0)=\frac{\phi'''(s)}{(1-q\phi'(s))^4}
+\frac{3q\,\phi''(s)^2}{(1-q\phi'(s))^5}
=\frac{(1-\eps)(1-\eps+\eps^2)}{\eps^3(2-\eps)^5}.
\]
Thus $\eps^3u'''(0)\to1/32$. This proves optimality of the exponent.
\end{proof}
\section{Positive weighted norms}\label{sec:weighted}
Weighted contraction is a standard tool for implicit networks; see, for example, \citet{jafarpour2021}, and \citet{elghaoui2021} for Perron--Frobenius well-posedness conditions. The result here transports the softplus curvature cancellation to that setting.
\begin{proposition}[Weighted curvature]\label{prop:weighted}
Let $W,w\ge0$ and suppose that there exist $\eta>0$ and $q<1$ with
$W\eta\le q\eta$. Normalize $\max_r\eta_r=1$ and set
\[
\norm{v}_\eta=\max_r |v_r|/\eta_r,\qquad
K_\eta=\max_r\norm{A_{r,:}}_2/\eta_r.
\]
The equilibrium exists uniquely, is smooth and convex, and
\[
\norm{D^2u_\theta}_2\le (w^\top\eta)K_\eta^2(1-q)^{-2}.
\]
Such a positive weight exists for every $q$ with $\rho(W)<q<1$.
\end{proposition}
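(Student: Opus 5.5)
The plan is to transport every step of the unweighted argument through the diagonal similarity $D=\diag(\eta)$. Set $\widetilde W=D^{-1}WD$. The hypothesis $W\eta\le q\eta$ together with $W\ge0$ says exactly that $\norm{\widetilde W}_\infty\le q$, i.e.\ $\norm{Wv}_\eta\le q\norm{v}_\eta$. The same holds for $\Psi W$ and $W\Psi$, because $0<\Psi<I$ is diagonal and commutes with $D$. Three consequences follow as in the unweighted case.

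\begin{itemize}
\item \textbf{Equilibrium.} The map $T_x(z)=\phi(Wz+Ax+b)$ is a $q$-contraction in $\norm{\cdot}_\eta$, because $\phi$ is $1$-Lipschitz componentwise. This gives existence and uniqueness, and the ICNN iterate argument of Proposition~\ref{prop:equilibrium} gives convexity verbatim.
\item \textbf{Smoothness.} This follows as in Proposition~\ref{prop:derivatives}. The Neumann series for $S=(I-\Psi W)^{-1}$ and $(I-W\Psi)^{-1}$ converge in the $\eta$-operator norm, so $B$ is invertible and the implicit function theorem applies. We also get $S\ge0$ and $\norm{(I-W\Psi)^{-1}}_\eta\le\delta^{-1}$.
\item \textbf{Hessian structure.} The derivative identities and the formula $D^2z^*[h,h]=S[\gamma\odot(Vh)^2]$ are algebraic, so they are unchanged.
\end{itemize}

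The key step is a weighted cancellation lemma replacing Lemma~\ref{lem:cancel}: $0\le S(\gamma\odot\eta^{2})\le\eta$. Using $\eta_r\le1$ from the normalization, and then $W\eta\le q\eta\le\eta$,
\[
\gamma\odot\eta^2\le(\one-\phi'(s))\odot\eta\le\eta-\Psi W\eta=B\eta .
\]
Applying $S\ge0$ gives the claim. This is exactly where the normalization $\max_r\eta_r=1$ is needed: it lets us drop one factor of $\eta$. I expect this to be the only genuinely new point.

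For the bound itself, take a Euclidean unit vector $h$. Then $|(Ah)_r|\le\norm{A_{r,:}}_2\le K_\eta\eta_r$, so $\norm{Ah}_\eta\le K_\eta$. Hence $\norm{Vh}_\eta\le K_\eta/\delta$, which means $(Vh)^2\le K_\eta^2\delta^{-2}\eta^2$ entrywise. Combining this with positivity and the weighted lemma,
\[
0\le D^2z^*[h,h]\le K_\eta^2\delta^{-2}S(\gamma\odot\eta^2)\le K_\eta^2\delta^{-2}\eta .
\]
Pairing with $w\ge0$ gives $h^\top Hh\le(w^\top\eta)K_\eta^2\delta^{-2}$. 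Since $H\succeq0$, the supremum over unit $h$ is $\norm{H}_2$, which is the stated bound.

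For the existence claim, fix $\rho(W)<q<1$. Take $\eta=\sum_{k\ge0}(W/q)^k\one$, which converges because $\rho(W/q)<1$. It satisfies $\eta\ge\one>0$ and
\[
W\eta=q\sum_{k\ge1}(W/q)^k\one=q(\eta-\one)\le q\eta .
\]
Rescaling so that $\max_r\eta_r=1$ preserves both properties.

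The endpoint $q=\rho(W)$ needs a caveat. For irreducible $W$ the Perron vector works with equality, but for reducible $W$ a strictly positive $\eta$ may fail to exist. This is why the statement is made only for $q>\rho(W)$.
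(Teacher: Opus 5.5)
Your proposal is correct and follows the paper's proof essentially step for step. Both arguments use weighted row-norm bounds on $W$, $\Psi W$ and $W\Psi$, a weighted cancellation, the directional Hessian formula, and the Neumann-series weight $\eta=(I-W/q)^{-1}\one$. The only difference is cosmetic: the paper proves the normalization-free cancellation $S(\gamma\odot\eta)\le\eta$ and spends $\eta_r\le1$ on the square, $(Vh)^2\le K_\eta^2\eta/\delta^2$, whereas you spend it inside the cancellation to obtain $S(\gamma\odot\eta^2)\le\eta$.
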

\begin{proof}
The $\eta$-weighted induced infinity norms of $W$, $\Psi W$ and $W\Psi$ are at most $q$. The earlier contraction, positivity and smoothness arguments therefore apply. Moreover,
\[
\gamma\odot\eta\le(I-\Psi)\eta\le (I-\Psi W)\eta,
\qquad S(\gamma\odot\eta)\le\eta.
\]
For a unit input direction, $|Vh|\le K_\eta\eta/(1-q)$. Because $0<\eta_r\le1$, its entrywise square is at most $K_\eta^2\eta/(1-q)^2$. The directional Hessian formula then gives the stated bound. Finally, for $q>\rho(W)$ the Neumann series gives
$\eta_0=(I-W/q)^{-1}\one>0$, and $W\eta_0=q(\eta_0-\one)\le q\eta_0$; normalization preserves this inequality.
\end{proof}
\begin{remark}[The spectral-radius endpoint need not be attained]
In general, $\inf_{\eta>0}\max_r(W\eta)_r/\eta_r=\rho(W)$, but a strictly positive minimizing vector need not exist. For
$W=\left(\begin{smallmatrix}r&1\\0&r\end{smallmatrix}\right)$ with $0\le r<1$, every positive $\eta$ gives a first-row ratio $r+\eta_2/\eta_1>r$. A positive Perron vector, when available, permits equality.
\end{remark}

\section{Analytical local envelopes}\label{sec:local}
\begin{proposition}[Local envelopes]\label{prop:local}
Let $Q=x_0+[-\rho,\rho]^n$ and let $a_r=\norm{A_{r,:}}_2$.
Write $\Psi(x)=\diag(\phi'(s(x)))$, with $s(x)=Wz^*(x)+Ax+b$. Suppose a diagonal matrix $\overline\Psi$ bounds $\Psi(x)$ entrywise on $Q$ and satisfies $0\le\overline\Psi\le I$. Set
\[
P=(I-W\overline\Psi)^{-1},\qquad e=\sqrt n\rho Pa.
\]
If $s_0=s(x_0)$, then $|s(x)-s_0|\le e$ on $Q$.
Consequently $\diag(\phi'(s_0+e))$ is another valid upper envelope.
Starting at $\overline\Psi=I$, intersection with this new bound preserves validity at every step.
\end{proposition}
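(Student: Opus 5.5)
The plan is to turn the entrywise bound on the diagonal slopes into a bound on the preactivation itself, using the derivative identities of Proposition~\ref{prop:derivatives}. Fix $x\in Q$ and the segment $x_t=x_0+t(x-x_0)$, $t\in[0,1]$, with direction $h=x-x_0$. Then $\norm{h}_2\le\sqrt n\rho$, and the componentwise bound $|Ah|\le\norm{h}_2\,a\le\sqrt n\rho\,a$ follows from Cauchy--Schwarz. By smoothness of $z^*$,
\[
s(x)-s_0=\int_0^1\bigl(WJ(x_t)h+Ah\bigr)\,dt=\int_0^1V(x_t)h\,dt .
\]
Here I use the identity $A+WJ=V$ from Proposition~\ref{prop:derivatives}, so the derivative of $s$ along the segment is exactly $V(x_t)h=(I-W\Psi(x_t))^{-1}Ah$.

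The key step is an entrywise comparison of resolvents. Since $W\ge0$ and $0\le\Psi(x_t)\le\overline\Psi$ on $Q$, the Neumann series gives
\[
0\le(I-W\Psi(x_t))^{-1}=\sum_{\ell\ge0}(W\Psi(x_t))^\ell\le\sum_{\ell\ge0}(W\overline\Psi)^\ell=P .
\]
This is monotone term by term. Convergence of the series for $P$ needs $\norm{W\overline\Psi}_\infty\le\norm{W}_\infty\le q<1$, which holds because $\overline\Psi\le I$. Hence $|V(x_t)h|\le P|Ah|\le\sqrt n\rho\,Pa=e$ entrywise, and integrating in $t$ gives $|s(x)-s_0|\le e$ on $Q$.

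The consequence follows because $\phi'$ is increasing (Lemma~\ref{lem:softplus}). We get $\phi'(s(x))\le\phi'(s_0+e)$ entrywise, and this bound lies in $(0,1)$, so it is again an admissible envelope with $0\le\cdot\le I$. For the iteration I will argue by induction. The starting envelope $\overline\Psi=I$ is valid because $\phi'<1$. If $\overline\Psi$ is valid, the argument above applies to it and makes $\diag(\phi'(s_0+e))$ valid. The entrywise minimum of two valid upper bounds is again an upper bound, lies in $[0,I]$, and so is valid; hence validity is preserved at every step.

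I expect the main point needing care to be the resolvent monotonicity. It must be applied with $W\Psi$, not $\Psi W$, to match $V$; with that order it is routine. The only other subtlety is that $s_0$ must be the exact preactivation at $x_0$. A computed $s_0$ would need an additional enclosure, but the statement as posed uses the exact value.
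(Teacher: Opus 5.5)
Your proof is correct and follows the same route as the paper's: Neumann-series monotonicity gives $(I-W\Psi(x))^{-1}\le P$, the bound $|Ds(x)h|\le P|Ah|\le\sqrt n\rho\,Pa$ is integrated along the segment from $x_0$ to $x$ (which stays in the convex box $Q$), and monotonicity of $\phi'$ with induction from $\overline\Psi=I$ gives the envelope update. Your closing caveat that $s_0$ must be the exact preactivation, or else replaced by a verified enclosure, also matches the paper's remark.
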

\begin{proof}
The Neumann expansion and $\Psi(x)\le\overline\Psi$ imply
$(I-W\Psi(x))^{-1}\le P$. For any unit direction $h$,
$|Ds(x)h|=|(I-W\Psi(x))^{-1}Ah|\le Pa$.
Integrate this inequality along the segment from $x_0$ to $x$, which lies in $Q$, and use $\norm{x-x_0}_2\le\sqrt n\rho$. Since $\phi'$ is increasing, the updated diagonal bounds $\Psi(x)$. The initial envelope is valid since $0<\phi'<1$, and induction proves the assertion. If only an enclosure $s_0\in[\underline s_0,\overline s_0]$ is available, replace the upper endpoint by $\overline s_0+e$ and the lower by $\underline s_0-e$; an unverified center approximation cannot be substituted for $s_0$.
\end{proof}
For any valid final envelope, recompute $P$ and define
$v=Pa$, $\overline S=(I-\overline\Psi W)^{-1}$. Let $\overline\gamma$ bound $\phi''(t)=\phi'(t)(1-\phi'(t))$ on the resulting preactivation intervals, including its maximum $1/4$ when the interval contains zero. Then
\begin{align*}
k&=\overline S(\overline\gamma\odot v^2),
& C_2(Q)&=w^\top k,\\
t&=\overline S\bigl(\overline\gamma\odot(v^3+3v\odot Wk)\bigr),
& C_3(Q)&=w^\top t
\end{align*}
bound the Hessian and third-derivative norms on $Q$. These follow by the same mixed-direction absolute-value arguments used above. In two dimensions,
$\Lip_Q(\det D^2u_\theta-f)\le2C_2(Q)C_3(Q)+\Lip_Q(f)$, whenever $f$ is Lipschitz on $Q$: the derivative of the determinant pairs $\cof(H)$ with a directional derivative of $H$, and $\norm{\cof(H)}_*\le2\norm H_2$.
This gives a center-plus-variation covering construction. A sampled analogue for explicit softplus ICNNs, combining Lipschitz constants of the residual with covering radii, appears in \citet[Lemmas 6.2--6.3]{nystrom2023}; here the derivative envelopes account for the equilibrium resolvent. Its sharpness and computational cost are separate questions.

\section{Proofs of the approximate-state certificates}\label{app:certproofs}
\subsection{Proof of Theorem~\ref{thm:certificate}}
\begin{proof}
The state estimate is Proposition~\ref{prop:equilibrium}. It implies
\[
\norm{s-\hat s}_\infty\le qE_z.
\]
By Lemma~\ref{lem:softplus},
\begin{equation}\label{eq:coefferrors}
\norm{\Psi-\hat \Psi}_\infty\le\alpha,\qquad
\norm{\gamma-\hat\gamma}_\infty\le\alpha.
\end{equation}
Let $r_i=\hat B\hat J_i-\hat \Psi A_{:,i}$. Since $BJ_i=\Psi A_{:,i}$ and $B=\hat B-(\Psi-\hat \Psi)W$, direct subtraction gives
\begin{equation}\label{eq:firstidentity}
B(J_i-\hat J_i)=(\Psi-\hat \Psi)\hat v_i-r_i.
\end{equation}
Multiplication by $S$ and use of $\norm{S}_\infty\le\delta^{-1}$ prove the first-derivative bound. In particular, if $e_i=v_i-\hat v_i=W(J_i-\hat J_i)$, then $\norm{e_i}_\infty\le\Delta_i$.

For the second derivatives, set
\[
r_{ij}=\hat B\hat K_{ij}-\hat\gamma\odot\hat v_i\odot\hat v_j.
\]
Subtracting the approximate equation from \eqref{eq:secondsystem} gives the exact identity
\begin{align}\label{eq:secondidentity}
B(K_{ij}-\hat K_{ij})={}&\gamma\odot v_i\odot v_j
-\hat\gamma\odot\hat v_i\odot\hat v_j\nonumber\\
&+(\Psi-\hat \Psi)W\hat K_{ij}-r_{ij}.
\end{align}
The difference of the nonlinear right-hand sides expands as
\begin{align*}
&\gamma\odot v_i\odot v_j-\hat\gamma\odot\hat v_i\odot\hat v_j\\
&\quad=(\gamma-\hat\gamma)\odot\hat v_i\odot\hat v_j
+\gamma\odot(e_i\odot\hat v_j+\hat v_i\odot e_j+e_i\odot e_j).
\end{align*}
Its infinity norm is at most $Q_{ij}$ by \eqref{eq:coefferrors}, $\norm{\gamma}_\infty\le1/4$ and the bounds on $e_i,e_j$. The remaining coefficient term is at most $\alpha q\norm{\hat K_{ij}}_\infty$. Applying $S$ to \eqref{eq:secondidentity} gives \eqref{eq:eij} and the second-derivative estimate.

Finally, $|H_{ij}-\hat H_{ij}|\le\norm{w}_1E_{ij}$. The symmetry hypothesis implies $E_{12}=E_{21}$, and the Frobenius norm dominates the spectral norm, proving \eqref{eq:hessianerror}. The value estimate follows directly from the readout and the state bound.
\end{proof}
\subsection{Proof of Corollary~\ref{cor:det}}
\begin{proof}
Write $E=H-\hat H$. The two-dimensional determinant identity gives
\[
\det(\hat H+E)-\det\hat H=\cof(\hat H):E+\det E.
\]
For a $2\times2$ matrix, the cofactor matrix has the same two singular values as the original matrix, in reversed order. Its nuclear norm is therefore at most $2\norm{\hat H}_2$. The matrix dual-norm inequality and $|\det E|\le\norm{E}_2^2$ imply \eqref{eq:edet}. The triangle and reverse triangle inequalities give \eqref{eq:residualinterval}. Since both Hessians are symmetric, the Rayleigh-quotient bound $v^\top Ev\ge-\norm{E}_2$ for unit $v$ proves \eqref{eq:eigenbound}.
\end{proof}
\section{Proofs of the regional and solution-error bounds}\label{app:regionalproofs}
\subsection{Proof of Proposition~\ref{prop:box}}
\begin{proof}
Apply Theorem~\ref{thm:certificate} separately at every $x\in X$ with the same constant candidates. Every expression for an error magnitude is nondecreasing in its nonnegative upper-bound inputs, with a strictly positive lower bound for $1-q$. Hence uniform residual upper bounds give uniform output bounds. The determinant claim is the triangle inequality. Interval evaluation provides the required simultaneous enclosures; it is not an assumption that a floating-point residual is already an upper bound.
\end{proof}
\subsection{Proof of Corollary~\ref{cor:cover}}
\begin{proof}
Every point belongs to a certified covering element, so the appropriate local bound is at most the maximum. Parameterization of the circle transfers the angular bounds to the whole boundary.
\end{proof}
\subsection{Proof of Proposition~\ref{prop:barrier}}
\begin{proof}
Set $b(x)=\sqrt R(\norm{x}_2^2-1)/2$. It vanishes on the circle, is convex, and satisfies $-\sqrt R/2\le b\le0$. For every positive semidefinite $2\times2$ matrix $H$,
\[
\det(H+\sqrt R I)=\det H+\sqrt R\operatorname{tr}H+R\ge\det H+R.
\]
Thus $u+b-B_\partial$ is convex, has boundary values at most those of $v$, and has determinant at least $\det D^2u+R\ge\det D^2v$. The convex Monge--Amp\`ere comparison principle gives $u+b-B_\partial\le v$. Interchanging $u,v$ gives $v+b-B_\partial\le u$. The two inequalities imply the result. The comparison principle is standard; see \citet[Theorem 2.10]{figalli2017} or \citet[Theorem 2.2]{nystrom2023}.
\end{proof}

\section{Finite-unroll derivative recurrences}\label{app:unroll}
The certificate is independent of how its approximate quantities are obtained. For completeness, derivatives of the finite iteration can be computed without symbolic differentiation. Start with $z_0=0$, $J_0=0$ and $K_{ij,0}=0$. At step $\ell$, define
\[
s_\ell=Wz_\ell+Ax+b,\quad d_\ell=\phi'(s_\ell),\quad
\gamma_\ell=\phi''(s_\ell),\quad v_{i,\ell}=A_{:,i}+WJ_{i,\ell}.
\]
Then the chain rule gives
\begin{align*}
z_{\ell+1}&=\phi(s_\ell),\\
J_{i,\ell+1}&=d_\ell\odot v_{i,\ell},\\
K_{ij,\ell+1}&=d_\ell\odot(WK_{ij,\ell})
+\gamma_\ell\odot v_{i,\ell}\odot v_{j,\ell}.
\end{align*}
At depth $L$, take $\hat z=z_L$, $\hat J_i=J_{i,L}$ and $\hat K_{ij}=K_{ij,L}$. The certificate coefficients must be recomputed at $\hat s=Wz_L+Ax+b$; they are not automatically the coefficients from the preceding unroll step. For a finite unroll, the derivative residuals in \eqref{eq:rhofirst}--\eqref{eq:rhosecond} are generally nonzero.

\end{document}